\documentclass[11pt,reqno]{amsart}
\usepackage[utf8]{inputenc} 
\usepackage[T1]{fontenc}    
\usepackage[english]{babel} 
\usepackage{amsmath}
\usepackage{amsthm}
\usepackage{amsfonts}
\usepackage{amsbsy,amsfonts,amsmath,amssymb,amscd,amsthm}
\usepackage{mathrsfs,float}
\usepackage[mathcal]{euscript}
\usepackage{enumitem}
\usepackage{mathtools}
\usepackage[colorlinks=true,linkcolor=blue,citecolor=green]{hyperref}
\usepackage{tikz}
\usepackage{tikz-cd}

\numberwithin{equation}{section}

\usepackage[a4paper,text={6.1in,8in},centering,top=1.5in]{geometry}
\small\normalsize
\newcommand{\proj}{P}
\newcommand{\dproj}{d_{\mathrm{proj}}}
\newcommand{\RR}{\mathbb R}

\newcommand{\cT}{\mathcal T}
\newcommand{\supp}{\operatorname{supp}}

\newcommand{\Gr}{\operatorname{Gr}}

\newcommand{\one}{\mathbf 1}

\newcommand{\cK}{\mathcal K}

\newcommand{\dd}{\,d}
\newcommand{\GL}{\mathrm{GL}}

\newcommand{\Leb}{\operatorname{Leb}}
\newcommand{\Pcpt}{\mathcal P_{\mathrm{cpt}}}
\newcommand{\Chi}{\mathcal X}

\theoremstyle{plain}
\newtheorem{mainthm}{Theorem}

\newtheorem{maincor}[mainthm]{Corollary}

\newtheorem{theorem}{Theorem}[section]
\newtheorem{proposition}[theorem]{Proposition}
\newtheorem{lemma}[theorem]{Lemma}
\newtheorem{corollary}[theorem]{Corollary}
\newtheorem{claim}{Claim}
\theoremstyle{definition}

\theoremstyle{remark}

\title[Pointwise H\"older continuity of Lyapunov exponents]{Pointwise H\"older continuity of Lyapunov exponents of random matrix products}

\author[Barrientos]{Pablo G. Barrientos}
\address{Instituto de Matem\'atica e Estat\'istica, UFF, Niter\'oi, Brazil}
    \email{pgbarrientos@id.uff.br}
\date{}

\begin{document}
~\vspace{-1cm}
\begin{abstract}
We prove that if the first Lyapunov exponent of a compactly supported random
matrix product is simple, then it is pointwise H\"older continuous for the Wasserstein--Hausdorff distance. By passing to
exterior powers, we obtain pointwise H\"older continuity of the first $k$
exponents whenever they are simple. \end{abstract}
~\vspace{-2cm}
\maketitle

\section{Introduction}\label{sec:intro} \thispagestyle{empty} 
Avila, Eskin and Viana \cite{AEV} proved continuity of all Lyapunov
exponents of compactly supported random matrix products when the laws
converge weakly and their supports converge in the Hausdorff topology. In this paper we prove a pointwise H\"older estimate for the exponents at every law
where they are simple. 

Let $d\ge2$, equip $\RR^d$ with a Euclidean norm, put $G=\GL(d,\RR)$, and denote by $\Pcpt(G)$ the space of compactly supported Borel probability
measures on $G$. For $\nu\in\Pcpt(G)$, define inductively the Lyapunov exponents $\lambda_1(\nu)\ge\cdots\ge\lambda_d(\nu)$ by
\[
 \sum_{i=1}^j\lambda_i(\nu)
 =\lim_{n\to\infty}\frac1n\int_{G^n}
       \log\|\wedge^j(g_n\cdots g_1)\|\,d\nu^{\otimes n},
 \qquad 1\le j\le d.
\]
Equip $G$ with $ \delta(g,h)=\|g-h\|+\|g^{-1}-h^{-1}\|$. 
Let $\delta_W$ be its $1$-Wasserstein distance and $\delta_H$ the associated
Hausdorff distance on compact subsets of $G$. Set
\begin{equation*}\label{eq:deltaT}
 \delta_{\cT}(\nu,\mu)
 =\delta_W(\nu,\mu)+\delta_H(\supp\nu,\supp\mu).
\end{equation*}
This metric induces the topology of \cite{AEV}. 
%Hausdorff convergence of
%supports confines them to a common compact subset of $G$, on which
%Wasserstein convergence is equivalent to weak convergence.

\begin{mainthm}\label{thm:main}
Let $\nu_0\in\Pcpt(G)$ satisfy $\lambda_1(\nu_0)>\lambda_2(\nu_0)$. Then there exist $r_0>0$, $C>0$ and $\theta\in(0,1]$ such that
\[
 |\lambda_1(\nu)-\lambda_1(\nu_0)|
 \le C\delta_{\cT}(\nu,\nu_0)^\theta
 \quad\text{for every }\nu\in\Pcpt(G)
 \text{ with }\delta_{\cT}(\nu,\nu_0)<r_0.
\]
\end{mainthm}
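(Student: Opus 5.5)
The approach is a finite-scale argument. Lyapunov exponents are continuous (Avila--Eskin--Viana), so the gap $\lambda_1(\nu)-\lambda_2(\nu)$ stays bounded below near $\nu_0$. The plan is to turn this uniform gap into a uniform exponential rate of convergence for the normalized $n$-step averages, and then balance that rate against a perturbation error that grows at most exponentially in $n$.

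\textbf{Step 1: a common compact set.} Fix $r_0$ small. Then $\delta_H(\supp\nu,\supp\nu_0)<r_0$ confines $\supp\nu$ to a fixed compact $K\subset G$, so $\log\|g\|$ and $\log\|g^{-1}\|$ are bounded by some $M$ on $K$. Set
\[
a_n(\nu)=\int\log\|g_n\cdots g_1\|\,d\nu^{\otimes n}.
\]
Step 2 will use the sequence $a_n$ directly; for $\lambda_2$ one uses $\wedge^2$ in the same way.

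\textbf{Step 2: perturbation bound at fixed $n$.} On $K$ the map $(g_1,\dots,g_n)\mapsto\log\|g_n\cdots g_1\|$ is Lipschitz in each coordinate for $\delta$, with constant $e^{cn}$. This holds because $\|Pg\|/\|Pg'\|$ is controlled by $\|g-g'\|\,\|g'^{-1}\|\,\|P\|\,\|P^{-1}\|$-type quantities, and products of $n$ elements of $K$ have norm and conorm at most $e^{Mn}$. Couple $\nu^{\otimes n}$ with $\nu_0^{\otimes n}$ coordinatewise using an optimal $\delta_W$-coupling. This gives
\[
|a_n(\nu)-a_n(\nu_0)|\le n e^{cn}\delta_W(\nu,\nu_0).
\]

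\textbf{Step 3: uniform exponential convergence.} This is the main obstacle. I will try to show that there are $C',\tau>0$ with
\[
\Bigl|\tfrac1n a_n(\nu)-\lambda_1(\nu)\Bigr|\le C'/n
\]
uniformly for $\delta_\cT(\nu,\nu_0)<r_0$. A $1/n$ rate is enough, and sharper quasi-additivity is even better.

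Subadditivity gives $\lambda_1\le a_n/n$ for free. The reverse inequality needs the gap. My first idea was a large deviation or quasi-superadditivity estimate, $a_{n+m}\ge a_n+a_m-C$, derived from uniform spectral gap of the Markov operator on projective space. That route fails: without irreducibility, a stationary measure may charge proper subspaces, so there is no spectral gap on Hölder functions.

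Instead I will work at a fixed scale $N$. For $\nu_0$, the gap implies that for large $N$ the $N$-step product has $\sigma_1/\sigma_2\ge e^{N(\lambda_1-\lambda_2)/2}$ with high $\nu_0^{\otimes N}$-probability. By Step 2, for fixed $N$ and $\nu$ close enough, the same holds with slightly worse constants, including the first and second exponents' block averages.

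Blocking then shows that the cocycle of $N$-blocks satisfies a cone or domination condition on average: the top singular direction is contracted in projective space with probability $1-\epsilon$. A standard random-cone argument then yields a uniform projective contraction in expectation, $\mathbb E\, d(g\cdot x,g\cdot y)^\alpha\le\kappa\, d(x,y)^\alpha$ with $\kappa<1$, for block products of length $N$. From this, the Furstenberg formula $\lambda_1=\int\!\!\int\log\frac{\|gv\|}{\|v\|}\,d\nu\,d\eta$ can be compared with $a_n/n$ with error $C'/n$, uniformly in $\nu$.

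If this averaged contraction proves hard without irreducibility, the fallback is to work with $\lambda_1$ and $\lambda_1+\lambda_2$ via $a_n$ and $a_n^{(2)}$. The uniform gap gives the needed growth of $\|\wedge^1\|^2/\|\wedge^2\|$, which yields the ``superadditivity up to constant'' along most paths.

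\textbf{Step 4: conclusion.} Write $\delta=\delta_\cT(\nu,\nu_0)$. Combining Steps 2 and 3 gives
\[
|\lambda_1(\nu)-\lambda_1(\nu_0)|\le 2C'/n+e^{c'n}\delta.
\]
Choose $n\approx\frac{\theta'}{c'}\log(1/\delta)$. This gives a bound $C\,\delta^{\theta}/\log$, which is only logarithmic. To obtain a genuine Hölder bound, Step 3 must give an exponentially small error $C'e^{-\tau n}$. That is obtainable for the two-sided comparison $\bigl|a_{2n}-2a_n\bigr|$-type differences when contraction is exponential: use the telescoping $\lambda_1=\lim a_{2^kn}/2^kn$, where each doubling step errs by $C'e^{-\tau n}$ after the Markov-operator contraction above. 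Then $n\approx\log(1/\delta)/(c'+\tau)$ gives $\theta=\tau/(c'+\tau)$.
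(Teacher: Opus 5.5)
Your Steps 1 and 2, and the balancing of $n$ against $\delta$ in Step 4, are fine; the paper uses the same fixed-$n$ perturbation bounds. But Step 3 is a genuine gap, and it is exactly where the difficulty of the theorem lies. The quasi-irreducible case is already covered by known spectral-gap results, so the real content is a nontrivial equator $E$.

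In that case the averaged contraction $\mathbb E\,\dproj(\gamma x,\gamma y)^\alpha\le\kappa\,\dproj(x,y)^\alpha$ for $N$-block products $\gamma$ is false already for $\nu_0$, which lies in your neighborhood. Such a contraction forces uniqueness of the stationary measure, and indeed a spectral gap on $C^\alpha$, which you yourself ruled out. But $\nu_0$ has a stationary measure on $\proj(E)$ and a different, maximizing one $m$. They differ because any stationary measure on $\proj(E)$ integrates $\phi$ to at most $\lambda_1(\nu_0|_E)<\lambda_1(\nu_0)$. Explicitly, take $x\in\proj(E)$ and let $\gamma_k$ be a product of $kN$ independent $\nu_0$-matrices. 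Then $\gamma_kx\in\proj(E)$, so $\dproj(\gamma_kx,\gamma_ky)\ge r(\gamma_ky)$. Stationarity gives $\int\mathbb E\,\dproj(\gamma_kx,\gamma_ky)^\alpha\,dm(y)\ge\int r^\alpha\,dm>0$ for every $k$, which cannot decay like $\kappa^k$.

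The finite-time singular-value gap you derive is robust under perturbation, but it does not see the mechanism that lowers the exponent. The perturbed stationary measure may accumulate mass near $\proj(E)$, where the growth rate is only $\lambda_1(\nu_0|_E)$. This is a long-time effect, invisible at any fixed block length. Your fallback via $\|\gamma\|^2/\|\wedge^2\gamma\|$ and ``superadditivity along most paths'' is the avalanche-principle route. It needs large-deviation and angle non-degeneracy estimates that are uniform over the whole neighborhood. You do not supply these, and they are not automatic, precisely because perturbations need not preserve $E$.

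Note also that Steps 2 and 3 are symmetric in the two laws. If Step 3 held uniformly on a $\delta_{\cT}$-ball, your argument would give $|\lambda_1(\nu)-\lambda_1(\nu')|\le C\delta_{\cT}(\nu,\nu')^\theta$ for all pairs in the ball. That locally uniform estimate is strictly stronger than the theorem, and is known only under extra hypotheses such as almost-irreducibility.

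The missing idea is a way to control the perturbed stationary mass near $\proj(E)$ using the structure of $\nu_0$ alone. The paper gets it as follows.
\begin{itemize}
\item The quotient cocycle on $\RR^d/E$ is quasi-irreducible with simple top exponent, so it has a spectral gap and a Poisson solution $\bar U$.
\item The radial identity turns this into an exact but logarithmically singular coboundary $H=u+\log r$ on $X=\proj(\RR^d)\setminus\proj(E)$.
\item Repulsion of $\proj(E)$ supplies a Lyapunov function $R=r^{-p}$.
\item Integrating H\"older regularizations of $H$ against the perturbed maximizing stationary measure gives the upper bound, together with an upper bound on the stationary mass near $\proj(E)$.
\end{itemize}
The lower bound is by contradiction. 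Suppose the drop $D_k=\lambda_1(\nu_0)-\lambda_1(\nu_k)$ is superpolynomially large compared with $\delta_k$. Then the same regularizations show that the stationary mass of $\{r\le t\}$ is comparable to $D_k$ for all $t\in[\delta_k^{2a},\delta_k^{a}]$, with a small fixed $a>0$. Pigeonholing produces nested tubes carrying almost all of that mass. After rescaling by $C_{t_k}$, this contradicts the localized induction of Avila--Eskin--Viana.
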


Applying Theorem~\ref{thm:main} to exterior powers gives the following pointwise
H\"older continuity of the first $k$ exponents. 

\begin{maincor}\label{cor:first-k}
Let $\nu_0\in\Pcpt(G)$  satisfy
$\lambda_j(\nu_0)>\lambda_{j+1}(\nu_0)$
for $j=1,\dots,\min\{k,d-1\}$ and some $k\in \{1,\dots,d\}$. Then there exist $r_0>0$, $C>0$ and $\theta\in(0,1]$ such that
\[
 |\lambda_i(\nu)-\lambda_i(\nu_0)|
 \le C\delta_{\cT}(\nu,\nu_0)^\theta \quad \text{for every $\nu\in\Pcpt(G)$ with $\delta_{\cT}(\nu,\nu_0)<r_0$ and  $i=1,\dots, k$.}
\]
In particular, the entire Lyapunov spectrum is pointwise H\"older
continuous at every compactly supported law with simple spectrum.
\end{maincor}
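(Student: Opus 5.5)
We deduce Corollary~\ref{cor:first-k} from Theorem~\ref{thm:main} by applying it to the exterior powers and then telescoping.

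For $1\le j\le k$ (with $j\le d-1$), consider the representation $\wedge^j:G\to \GL(\wedge^j\RR^d)$. Put $D_j=\binom dj$ and equip $\wedge^j\RR^d$ with the induced Euclidean norm. Let $\nu^{(j)}=(\wedge^j)_*\nu$ be the push-forward law, which lies in $\Pcpt(\GL(D_j,\RR))$. It is classical that the Lyapunov exponents of $\nu^{(j)}$ are the sums $\lambda_{i_1}(\nu)+\dots+\lambda_{i_j}(\nu)$ over $i_1<\dots<i_j$. Hence
\[
\lambda_1(\nu^{(j)})=\sum_{i\le j}\lambda_i(\nu),
\]
which follows directly from the defining formula with $\|\wedge^j(g_n\cdots g_1)\|$. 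The second exponent is
\[
\lambda_2(\nu^{(j)})=\sum_{i\le j-1}\lambda_i(\nu)+\lambda_{j+1}(\nu).
\]
So the gap $\lambda_1(\nu_0^{(j)})-\lambda_2(\nu_0^{(j)})$ equals $\lambda_j(\nu_0)-\lambda_{j+1}(\nu_0)>0$. Theorem~\ref{thm:main}, applied in dimension $D_j$, then gives constants $r_j,C_j,\theta_j$ such that
\[
|\lambda_1(\mu)-\lambda_1(\nu_0^{(j)})|\le C_j\,\delta_{\cT}(\mu,\nu_0^{(j)})^{\theta_j}
\quad\text{whenever }\delta_{\cT}(\mu,\nu_0^{(j)})<r_j .
\]
To justify the formula for $\lambda_2(\nu^{(j)})$, I would use the definition with $\wedge^2$ of $\wedge^j$. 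Alternatively, one can avoid it by using only the singular value description: the top two singular values of $\wedge^j A$ are $\sigma_1\cdots\sigma_j$ and $\sigma_1\cdots\sigma_{j-1}\sigma_{j+1}$.

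The main step is comparing the metrics, that is, showing that $\nu\mapsto\nu^{(j)}$ is Lipschitz for $\delta_{\cT}$ near $\nu_0$. Fix a compact neighbourhood $K$ of $\supp\nu_0$ in $G$; for $r_0$ small, $\delta_H$-closeness forces $\supp\nu\subset K$. On $K$, the maps $g\mapsto\wedge^j g$ and $g\mapsto(\wedge^j g)^{-1}=\wedge^j(g^{-1})$ are polynomial in the entries of $g$ and $g^{-1}$ respectively, hence Lipschitz with some constant $L$. Thus $\wedge^j$ is $L$-Lipschitz from $(K,\delta)$ to $(\wedge^jK,\delta)$. Lipschitz maps contract the Hausdorff distance of images by the factor $L$, and they contract the $1$-Wasserstein distance of push-forwards by the same factor, since couplings push forward. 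Therefore
\[
\delta_{\cT}(\nu^{(j)},\nu_0^{(j)})\le L\,\delta_{\cT}(\nu,\nu_0).
\]
Choosing $r_0$ small enough that $Lr_0<\min_j r_j$, we obtain
\[
\Bigl|\sum_{i\le j}\lambda_i(\nu)-\sum_{i\le j}\lambda_i(\nu_0)\Bigr|\le C_jL^{\theta_j}\delta_{\cT}(\nu,\nu_0)^{\theta_j}.
\]

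Taking differences of consecutive partial sums gives the bound for each $\lambda_i$ with $i\le\min\{k,d-1\}$. Set $\theta=\min_j\theta_j$ and use $\delta^{\theta_j}\le \delta^{\theta}$ for $\delta<1$, shrinking $r_0$ below $1$ if necessary. If $k=d$, the remaining exponent $\lambda_d$ is obtained from $\sum_{i\le d}\lambda_i(\nu)=\int\log|\det g|\,d\nu$. This map is Lipschitz in $\delta_W$, because $\log|\det g|$ is Lipschitz on $K$ and the Wasserstein distance is the dual of Lipschitz test functions. Combining it with the bound on the first $d-1$ partial sums handles $\lambda_d$. The final statement of the corollary is the case $k=d$.
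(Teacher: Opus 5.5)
Your proposal is correct and follows essentially the same route as the paper. You push forward to exterior powers, show that $\nu\mapsto\nu^{(j)}$ is Lipschitz for $\delta_{\cT}$ via pushed-forward couplings and images of supports, apply Theorem~\ref{thm:main} to the partial sums $S_j$ using the gap $\lambda_j-\lambda_{j+1}$, handle $S_d$ through $\int\log|\det g|\,d\nu$, and telescope. The remaining differences are cosmetic: you invoke smoothness on a compact set and Kantorovich duality, where the paper writes explicit telescoping and singular-value bounds with constants $jM^{j-1}$ and $dM$.
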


In dimension two, this result was proved by Tall and
Viana~\cite{TV20}. The H\"older estimate compares each perturbation with the fixed law $\nu_0$. A locally uniform H\"older estimate between any two laws in a neighborhood is already known under additional assumptions. Le Page~\cite{LP89} proved this locally uniform 
H\"older regularity of the first Lyapunov exponent assuming proximality and strong irreducibility. Barrientos and Malicet
\cite[Prop.~XIV(3)]{BMbook} improve the result of Le Page by proving locally uniform H\"older continuity under simplicity of the top exponent
in arbitrary dimension for compactly supported almost-irreducible
cocycles, including the case where the transposed cocycle is
almost-irreducible. Recall that the equator of a law is its maximal invariant subspace $E$
with $\lambda_1(\nu|_E)<\lambda_1(\nu)$, where $\nu|_E$ is the law of the
restrictions and $\lambda_1(\nu|_{\{0\}})=-\infty$. The law $\nu$ is \emph{quasi-irreducible}
when $E=\{0\}$. \emph{Almost-irreducibility} means that there are a nonempty compact set
$X\subset\proj(\RR^d)\setminus\proj(E)$ and an open set $O\subset X$ in $\proj(\RR^d)$ such that
$gX\subset O$ for all  $g \in \supp\nu$,   \cite[Def.~1.10.2]{BMbook}. In particular, every quasi-irreducible law is almost-irreducible. Moreover, the classical assumptions of irreducibility and strong irreducibility imply almost-irreducibility.

For laws without compact support, Barrientos, Nisoli and Malicet \cite[Thm,~C]{BNM} prove continuity under quasi-irreducibility and a
logarithmic moment assumption, with respect to an averaged distance, or
equivalently its associated Wasserstein distance. Under simplicity and an
exponential moment assumption, and assuming that the law or its transpose
is quasi-irreducible, Barrientos and Malicet
\cite[Prop.~XII and XIV(2)]{BMbook} obtain locally uniform H\"older
continuity in  Wasserstein distances. In these
statements, the Wasserstein formulation is on the corresponding space of
laws with finite first moment for the specified cost $\delta$.

The starting point here is the projective dynamics on
$X=\proj(\RR^d)\setminus\proj(E)$. Under simplicity of the top exponent,
Barrientos and Malicet \cite[Thm.~C]{BMlc} prove contraction on average
in a weighted projective geometry and a spectral gap on a weighted H\"older space. Their repulsion estimate for the equator supplies a
Lyapunov function. Together with the Poisson equation on the quotient,
this yields global regularizations of a logarithmically singular
coboundary. These functions control both the exponent and stationary mass
in shrinking neighborhoods of the equator. Arbitrary perturbations need
not preserve the equator or an attracting compact subset of $X$.
To treat this difficulty, we use the localized induction of
\cite[Thm.~7.1]{AEV}, after selecting an annulus and conjugating its
outer boundary to a fixed scale.

Section~\ref{sec:fixed-base} contains the preliminary estimates, and
Section~\ref{sec:geometry} proves the upper bound of the H\"older estimate.
Section~\ref{sec:bad-sequence} establishes the lower bound by contradiction.
Section~\ref{sec:conclusion} completes the proof of Theorem~\ref{thm:main} and proves Corollary~\ref{cor:first-k}.

\section{One-step cocycles and preliminary estimates}\label{sec:fixed-base}
\subsection{Realization on a common probability space}\label{subsec:fixed-base-roadmap}
We use the Bernoulli space
\[
 (\Omega,\mathbb P)=([0,1]\times G,\Leb\times\nu_0)^{\mathbb N},
 \qquad A(\omega)=g_0,\quad \omega_0=(u_0,g_0).
\]
For a one-step cocycle $B$ on this space, write $\nu_B=B_*\mathbb P$ and $\lambda_1(B)=\lambda_1(\nu_B)$. In particular, $\nu_A=\nu_0$ and $\lambda_1(A)=\lambda_1(\nu_0)$. For any two such cocycles, use
\[
 d(A,B)=\int\delta(A,B)\,d\mathbb P,\qquad
 \Delta(A,B)=d(A,B)+\delta_H(\supp\nu_A,\supp\nu_B).
\]
The coupling realization lemma \cite[Lem.~7.6.2]{BMbook} gives, for every
$\nu\in\Pcpt(G)$ and $\zeta>0$, a one-step cocycle $B:\Omega \to G$ with
\begin{equation}\label{eq:BM-realization}
 \nu_B=\nu,\qquad
 \Delta(A,B)\le\delta_{\cT}(\nu_0,\nu)+\zeta.
\end{equation}
Conversely, the joint
law of $(A,B)$ is a coupling, so by the definition of Wasserstein distance, 
\begin{equation}\label{eq:law-nonexpansion}
 \delta_{\cT}(\nu_0,\nu_B)\le\Delta(A,B).
\end{equation}
See~\cite[Rem.~7.6.3]{BMbook} for details. It therefore suffices to prove, for this fixed $A$,
\begin{equation}\label{eq:fixed-reference-target}
 |\lambda_1(B)-\lambda_1(A)|\le C\Delta(A,B)^\theta
\end{equation}
for all sufficiently close $B$. The constants will depend only on $\nu_0$.

If $A$ is quasi-irreducible, the conclusion follows from
\cite[Prop.~XII and~XIV(2)]{BMbook}. Henceforth we assume that the equator $E$ is non-trivial. Throughout the proof, the constants $C$ or $C_n$ denote positive constants depending only on the
reference law and on parameters already fixed; their values may increase
between estimates.

\subsection{Products, potentials and Markov operators}
Write $A^n(\omega)=A(\omega_{n-1})\cdots A(\omega_0)$ for $n\ge1$; use the analogous notation for $B^n$.
For every Euclidean vector space $W$, denote its projective space by
$\proj(W)$ and equip it with the angular distance
\[
 \dproj([x],[y]):=\frac{\|x\wedge y\|}{\|x\|\,\|y\|},
 \qquad [x],[y]\in\proj(W).
\]
For $0<\alpha\le1$ and  $f\in C^\alpha(\proj(\RR^d))$ we use the norm
$\|f\|_{\alpha}=\|f\|_\infty+[f]_\alpha$, where
\[
 [f]_\alpha:=\sup_{[x]\ne[y]}
 \frac{|f([x])-f([y])|}{\dproj([x],[y])^\alpha}.
\]
The Markov operator and the projective potential are 
\[
 P_A f([x])=\int f([A(\omega)x])\,\dd\mathbb{P} \quad \text{and} \quad
 \phi_A([x])=\int\log\frac{\|A(\omega)x\|}{\|x\|} \dd\mathbb{P}.
\]
Put
\begin{equation}\label{eq:phi-def}
 \phi_{A,n}([x])
 :=\int\log\frac{\|A^n(\omega)x\|}{\|x\|}\,\dd\mathbb P
 =\sum_{j=0}^{n-1}P_A^j\phi_A([x]).
\end{equation}

After restricting to a sufficiently small $\Delta$-neighborhood of $A$, all
one-step supports are contained in one compact set
$\cK\subset G$.  Hence there is $\Lambda\ge1$ such that
\begin{equation}\label{eq:Lambda}
 \max\{\|g\|,\|g^{-1}\|\}\le\Lambda
 \quad \text{for all $g\in\cK$}.
\end{equation}

\begin{lemma}\label{lem:product-pert}
For every fixed $n\ge1$ there is $C_n>0$ such that
\begin{equation}\label{eq:product-mean}
 \int\|B^n-A^n\|\,\dd\mathbb P\le C_n d(A,B).
\end{equation}
\end{lemma}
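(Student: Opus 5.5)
The plan is to telescope the difference of products and use the independence of the coordinates of $\omega$ under $\mathbb P$, which makes the expectation of each term factor. For $n\ge1$ I will write
\[
 B^n-A^n=\sum_{j=0}^{n-1} B(\omega_{n-1})\cdots B(\omega_{j+1})\,\bigl(B(\omega_j)-A(\omega_j)\bigr)\,A(\omega_{j-1})\cdots A(\omega_0),
\]
with the usual convention that empty products are the identity. This is the standard identity obtained by inserting and cancelling the mixed products $B(\omega_{n-1})\cdots B(\omega_{j})A(\omega_{j-1})\cdots A(\omega_0)$ for $j=0,\dots,n$.

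Next I bound each summand using submultiplicativity of the operator norm and the uniform bound \eqref{eq:Lambda}. We have restricted to a small $\Delta$-neighborhood of $A$, so $\supp\nu_B\subset\cK$. Hence $\|B(\omega_i)\|\le\Lambda$ and $\|A(\omega_i)\|\le\Lambda$ for $\mathbb P$-almost every $\omega$. The $j$-th summand therefore has norm at most $\Lambda^{n-1}\|B(\omega_j)-A(\omega_j)\|$.

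Then I integrate. Since $A$ and $B$ are one-step cocycles, $B(\omega_j)-A(\omega_j)$ depends only on the coordinate $\omega_j$, and under the product measure $\mathbb P$ the coordinate $\omega_j$ has the same law as $\omega_0$. So
\[
 \int\|B(\omega_j)-A(\omega_j)\|\,\dd\mathbb P=\int\|B-A\|\,\dd\mathbb P\le d(A,B),
\]
where the last inequality uses $\|g-h\|\le\delta(g,h)$. Summing over $j$ gives \eqref{eq:product-mean} with $C_n=n\Lambda^{n-1}$.

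There is no genuine obstacle here. The only point to check is that the almost-sure bounds $\|A\|,\|B\|\le\Lambda$ really hold. They do, because the values of $A$ and $B$ lie $\mathbb P$-a.s. in $\supp\nu_A$ and $\supp\nu_B$ respectively, and both supports are contained in $\cK$.
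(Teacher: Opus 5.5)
Your proposal is correct and follows the same route as the paper: the telescoping identity, the pointwise bound $\|B^n-A^n\|\le\Lambda^{n-1}\sum_j\|B(\omega_j)-A(\omega_j)\|$ from \eqref{eq:Lambda}, and integration using that all coordinates of $\omega$ have the same law, giving $C_n=n\Lambda^{n-1}$. Your opening remark about independence making expectations factor is not needed, since the argument only uses that the coordinates are identically distributed.
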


\begin{proof}
Put $a_j=A(\omega_{j-1})$ and $b_j=B(\omega_{j-1})$, $1\le j\le n$.
Then $A^n=a_n\cdots a_1$ and $B^n=b_n\cdots b_1$, and
\[
 B^n-A^n
 =\sum_{j=1}^n b_n\cdots b_{j+1}(b_j-a_j)a_{j-1}\cdots a_1.
\]
By \eqref{eq:Lambda},
 $\|B^n-A^n\|\le\Lambda^{n-1}\sum_{j=1}^n\|b_j-a_j\|$. Since all coordinates have the same law, integrating this yields~\eqref{eq:product-mean}. 
\end{proof}

\begin{lemma} \label{lem:fixed-pert}
Fix $n\ge1$ and $0<\alpha\le1$.  There is $C_n$ such that, for all $B$ in the
chosen neighborhood and $\delta=\Delta(A,B)$,
\begin{align}
 \|\phi_{B,n}-\phi_{A,n}\|_\infty
 \le C_n\delta \quad \text{and}  \quad
 \|(P_B^n-P_A^n)f\|_\infty
 \le C_n\delta^\alpha\|f\|_{\alpha}.          \label{eq:fixed-operator}
\end{align}
\end{lemma}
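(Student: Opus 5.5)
Both estimates in \eqref{eq:fixed-operator} reduce to Lemma~\ref{lem:product-pert}, together with the uniform bound \eqref{eq:Lambda} on $\cK$ and the fact that $d(A,B)\le\Delta(A,B)=\delta$. The plan is to write both differences as integrals over $\Omega$ of pointwise quantities that are Lipschitz in the matrix, and then integrate against $\mathbb P$. This is possible because $A^n(\omega)$ and $B^n(\omega)$ live on the same probability space.

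For the potentials, fix a unit vector $x$ and write
\[
 \phi_{B,n}([x])-\phi_{A,n}([x])=\int\bigl(\log\|B^n(\omega)x\|-\log\|A^n(\omega)x\|\bigr)\,d\mathbb P.
\]
Every $g$ that is a product of $n$ elements of $\cK$ satisfies $\|gx\|\ge\Lambda^{-n}$, since $\|g^{-1}\|\le\Lambda^n$. The function $\log$ is $\Lambda^n$-Lipschitz on $[\Lambda^{-n},\infty)$. Hence the integrand is bounded by $\Lambda^n\bigl|\|B^nx\|-\|A^nx\|\bigr|\le\Lambda^n\|B^n-A^n\|$. Integrating and applying \eqref{eq:product-mean} gives $\|\phi_{B,n}-\phi_{A,n}\|_\infty\le\Lambda^nC_n\,d(A,B)\le C_n\delta$. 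This bound is uniform in $[x]$.

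For the Markov operators, note first that
\[
 P_A^nf([x])=\int f([A^n(\omega)x])\,d\mathbb P ,
\]
which follows from the Bernoulli structure by iterating $P_A$; the same holds for $B$. Then
\[
 |(P_B^n-P_A^n)f([x])|\le[f]_\alpha\int\dproj([B^nx],[A^nx])^\alpha\,d\mathbb P .
\]
The key pointwise estimate is the following: for unit $x$ and $g,h$ with $\|gx\|,\|hx\|\ge\Lambda^{-n}$,
\[
 \dproj([gx],[hx])=\frac{\|gx\wedge hx\|}{\|gx\|\|hx\|}
 =\frac{\|gx\wedge(h-g)x\|}{\|gx\|\|hx\|}\le\frac{\|(h-g)x\|}{\|hx\|}\le\Lambda^n\|h-g\|.
\]
We also have the trivial bound $\dproj\le1$, so $\dproj^\alpha\le\min\{1,\Lambda^n\|B^n-A^n\|\}^\alpha\le\Lambda^{n\alpha}\|B^n-A^n\|^\alpha$. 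The integral $\int\|B^n-A^n\|^\alpha\,d\mathbb P$ is then handled by Jensen's inequality for the concave map $t\mapsto t^\alpha$, which gives the bound $\bigl(\int\|B^n-A^n\|\,d\mathbb P\bigr)^\alpha\le(C_n\delta)^\alpha$. Combined with $[f]_\alpha\le\|f\|_\alpha$, this yields the second estimate.

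The step I expect to need the most care is the justification of the iterated-operator identity together with the uniformity of the lower bound $\|B^n(\omega)x\|\ge\Lambda^{-n}$. Both require that $B(\omega)\in\cK$ for $\mathbb P$-almost every $\omega$. This holds because $\supp\nu_B\subset\cK$ after restricting to the chosen $\Delta$-neighborhood. The rest of the argument is the single-line Lipschitz estimates above.
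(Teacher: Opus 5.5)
Your proposal is correct and follows essentially the same route as the paper: the pointwise Lipschitz bounds on $\cK_n$, integrated against $\mathbb P$ via Lemma~\ref{lem:product-pert}, with Jensen's inequality for $s\mapsto s^\alpha$. The only difference is cosmetic: you bound the projective distance through the identity $gx\wedge hx=gx\wedge(h-g)x$, which gives the constant $\Lambda^n$, whereas the paper compares normalized vectors and gets $2\Lambda^n$.
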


\begin{proof}
Let $\cK_n$ be the compact set of all products of $n$ matrices from $\cK$.
For $g,h\in\cK_n$ and a unit vector $x$, \eqref{eq:Lambda} gives
$\|gx\|,\|hx\|\ge\Lambda^{-n}$.  Since $|\log s-\log t|\le
\Lambda^n|s-t|$ for $s,t\ge\Lambda^{-n}$,
\[
 \left|\log\|gx\|-\log\|hx\|\right|
 \le \Lambda^n\|(g-h)x\|
 \le \Lambda^n\|g-h\|.
\]
After integrating with $g=B^n$, $h=A^n$ and using
Lemma~\ref{lem:product-pert}, we obtain the first inequality in \eqref{eq:fixed-operator} uniformly
in $[x]$. For the projective action, using unit representatives and the elementary
estimate for normalized vectors and angular distance,
\[
\dproj(g[x],h[x])\le \left\|\frac{gx}{\|gx\|}-\frac{hx}{\|hx\|}\right\|
 \le 2\Lambda^n\|(g-h)x\|
 \le 2\Lambda^n\|g-h\|.
\]
Consequently, 
$ |f(B^n[x])-f(A^n[x])|
 \le [f]_{\alpha}\,(2\Lambda^n)^\alpha\|B^n-A^n\|^\alpha$. 
Integrating and using the concavity of $s\mapsto s^\alpha$ for $0<\alpha\le1$ gives
\[
 \|(P_B^n-P_A^n)f\|_\infty \leq [f]_\alpha (2\Lambda^n)^\alpha \int\|B^n-A^n\|^\alpha\dd\mathbb P
 \le C_n \|f\|_\alpha  d(A,B)^\alpha. \qedhere
\]
\end{proof}

We will use the Furstenberg variational formula~\cite[Thm.~5.2.1]{BMbook}
\[
 \lambda_1(B)=\max\left\{\int\phi_B\,dm:P_B^*m=m\right\}.
\] 
A stationary probability attaining the maximum is called \emph{maximizing}.
For such a probability, stationarity and \eqref{eq:phi-def} give
$\int\phi_{B,n}\,dm=n\lambda_1(B)$ for every $n\ge1$.

\subsection{The equator, the quotient and the Poisson equation}\label{sec:reference-geometry}

Let $E\subset\RR^d$ be the equator of~$A$. By assumption $E\ne\{0\}$, and $\lambda_1(A|_E)<\lambda_1(A)$ by definition.
Set
\[
 X:=\proj(\RR^d)\setminus\proj(E) \qquad \text{and} \qquad 
  \lambda:=\lambda_1(A).
\]
Fix the orthogonal decomposition $\RR^d=E\oplus E^\perp$ and set $V:=\RR^d/E$. Let $Q:\RR^d\to V$ be the quotient map, and identify $V$ isometrically
with $E^\perp$. Set
\begin{equation}\label{eq:r}
 r([x]):=\dproj([x],\proj(E)),\qquad [x]\in\proj(\RR^d).
\end{equation}
The function $r$ is $1$-Lipschitz because it is the distance to the closed set
$\proj(E)$.  For $x\ne0$, orthogonal projection gives the exact formula $r([x])=\|Qx\| / \|x\|$. 
Put
\[
 \ell:=\log r\quad\hbox{on }X.
\]

Let $\bar A$ be the quotient cocycle on $V$.  According to~\cite[Claim 5.5.2]{BMbook},  $\bar A$ is~quasi-irreducible~and 
\begin{equation}\label{eq:quotient-top}
 \lambda_1(\bar A)=\lambda_1(A)=\lambda.
\end{equation}
For $x\notin E$, the identity
$\frac{r([x])}{r([A^nx])}
 =\frac{\|A^nx\|/\|x\|}{\|\bar A^nQx\|/\|Qx\|}$
gives, after taking logarithms and expectations,
\begin{equation}\label{eq:radial-identity}
 \phi_{A,n}([x])
 =\bar\phi_{A,n}([Qx])+\ell([x])-P_A^n\ell([x]),
 \qquad [x]\in X.
\end{equation}
Here $\bar \phi_{A,n}$ denotes~\eqref{eq:phi-def} for the cocycle $\bar A$.  
This is the identity used in \cite[Prop.~7.1]{BMlc}.

\begin{lemma}\label{lem:quotient-poisson}
There exist $0<\alpha\le1$ and
$\bar U\in C^\alpha(\proj(V))$ such that
\begin{equation}\label{eq:quotient-poisson}
 \bar\phi_A-\lambda=\bar U-P_{\bar A}\bar U.
\end{equation}
Consequently, with $u([x]):=\bar U([Qx])$ and $H([x]):=u([x])+\ell([x])$  for $[x] \in X$,  one has
\begin{equation}\label{eq:exact-coboundary}
 \phi_{A,n}-n\lambda=H-P_A^nH\quad\text{on $X$ for every $n\ge1$.}
\end{equation}

\end{lemma}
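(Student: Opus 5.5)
The plan has two parts: first produce $\bar U$ by a spectral gap argument for the quotient cocycle, then lift the one-step identity along the radial identity \eqref{eq:radial-identity}.

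\emph{Construction of $\bar U$.} By \cite[Claim 5.5.2]{BMbook} the quotient cocycle $\bar A$ on $V$ is quasi-irreducible with $\lambda_1(\bar A)=\lambda$. It is also compactly supported, with $\supp\nu_{\bar A}$ contained in the image of $\cK$. In this setting the spectral gap results of \cite[Thm.~C]{BMlc} (equivalently \cite[Prop.~XII]{BMbook}) apply.

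\begin{itemize}
\item Since the equator of $\bar A$ is trivial, the weighted projective geometry reduces to $\proj(V)$ with the angular metric. We obtain $\alpha\in(0,1]$, $C>0$ and $\rho<1$ with
\[
 \Big\|P_{\bar A}^n f-\int f\,d\bar m\Big\|_\alpha\le C\rho^n\|f\|_\alpha
\]
for all $f\in C^\alpha(\proj(V))$.
\item The unique stationary measure $\bar m$ is maximizing, so the Furstenberg formula gives $\int\bar\phi_A\,d\bar m=\lambda$.
\item The potential $\bar\phi_A$ is Lipschitz on $\proj(V)$, because the matrices lie in the compact set $\cK$ (same computation as in Lemma~\ref{lem:fixed-pert}). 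Hence $\bar\phi_A\in C^\alpha$, and $\bar\phi_A-\lambda$ has zero $\bar m$-mean.
\item Define $\bar U=\sum_{j\ge0}P_{\bar A}^j(\bar\phi_A-\lambda)$. The series converges geometrically in $C^\alpha$, and a telescoping computation gives \eqref{eq:quotient-poisson}.
\end{itemize}

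The main obstacle is checking that the spectral gap statement applies to $\bar A$ exactly as stated there. Quasi-irreducibility alone, without strong irreducibility or proximality, must give uniqueness of the stationary measure and contraction. Here simplicity of $\lambda_1(\bar A)$ is needed, since \cite[Thm.~C]{BMlc} requires it. I expect simplicity to follow from $\lambda_1(\bar A)=\lambda>\lambda_2(A)$: the quotient exponents are a subset of those of $A$, and the top one has multiplicity one. If the cited result is only stated for laws on $G$, I would pass to $\bar A$ by identifying $V$ with $E^\perp$ and viewing $\bar A$ as a $\GL(V)$-valued one-step cocycle.

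\emph{Lifting to $X$.}
\begin{itemize}
\item Note that $P_A(u)([x])=\int\bar U([\bar A Qx])\,d\mathbb P=P_{\bar A}\bar U([Qx])$, since $Q A x=\bar A Qx$. Iterating, $P_A^n u=(P_{\bar A}^n\bar U)\circ Q$ on $X$.
\item Summing \eqref{eq:quotient-poisson} along $P_{\bar A}^j$ for $j<n$ gives $\bar\phi_{A,n}-n\lambda=\bar U-P_{\bar A}^n\bar U$, by \eqref{eq:phi-def} applied to $\bar A$.
\item Compose with $Q$ and insert into \eqref{eq:radial-identity}:
\[
 \phi_{A,n}-n\lambda=u-P_A^nu+\ell-P_A^n\ell=H-P_A^nH \quad\text{on } X.
\]
\end{itemize}

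It remains to check that each term is finite. The invariance of $E$ keeps $X$ invariant, so $A^nx\notin E$ and $\ell(A^n[x])$ is defined. Moreover $|\ell(A^n[x])-\ell([x])|\le 2n\log\Lambda$ by \eqref{eq:Lambda}, so $P_A^n\ell$ is finite on $X$ and the identity holds pointwise there.
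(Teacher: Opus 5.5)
Your proposal is correct and follows the paper's proof step for step: spectral gap for the quasi-irreducible quotient $\bar A$, then $\bar U$ as the Neumann series of $\bar\phi_A-\lambda$ (whose $\bar m$-mean vanishes by the variational formula), then telescoping, and finally lifting through $Q$ and the radial identity \eqref{eq:radial-identity}. The step you leave hedged, simplicity of $\lambda_1(\bar A)$, is closed in the paper more elementarily than by your union-of-spectra argument: orthogonality of the quotient gives $\|\wedge^2\bar A^n\|\le\|\wedge^2A^n\|$, hence $\lambda_2(\bar A)\le\lambda_2(A)<\lambda=\lambda_1(\bar A)$, and the spectral gap is then quoted from \cite[Prop.~VI]{BMbook} rather than Prop.~XII.
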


\begin{proof}
If $\dim V=1$, then $\proj(V)$ is a point,
$\bar\phi_A\equiv\lambda$, and we take $\bar U=0$; then
\eqref{eq:radial-identity} immediately gives \eqref{eq:exact-coboundary}.
Assume $\dim V\ge2$. Since $Q$ is the orthogonal quotient,
$\|\wedge^2\bar A^n\|\le\|\wedge^2 A^n\|$.
Taking logarithms, expectations and limits gives
$\lambda_1(\bar A)+\lambda_2(\bar A)
 \le\lambda_1(A)+\lambda_2(A)$. 
Together with \eqref{eq:quotient-top}, this proves
$\lambda_2(\bar A)\le\lambda_2(A)<\lambda_1(\bar A)$.

Applying  \cite[Prop.~VI]{BMbook},  
$\bar A$ has spectral gap on $C^\alpha(\proj(V))$ for any $\alpha>0$ small enough. That is, there are $C_{\rm sg}>0$,
$\rho_{\rm sg}\in(0,1)$, and a unique stationary probability $\bar m$ such
that
\begin{equation}\label{eq:quotient-spectral-gap}
 \left\|P_{\bar A}^n f-\int f\,d\bar m\cdot\mathbf 1\right\|_{\alpha}
 \le C_{\rm sg}\rho_{\rm sg}^n\|f\|_{\alpha}
 \qquad \text{for every $n\ge0$}.
\end{equation}
Compact support makes $\bar\phi_A$ Lipschitz, by the same elementary estimate
used in Lemma~\ref{lem:fixed-pert}. In particular,
$\bar\phi_A\in C^\alpha(\proj(V))$.

To identify the mean, use \cite[Thm.~5.2.1]{BMbook}, which
states the Furstenberg variational formula
$\lambda_1(\bar A)
 =\max\left\{\int\bar\phi_A\,d\mu:
        \mu\text{ is }P_{\bar A}\text{-stationary}\right\}$. Since $\bar m$ is the unique stationary probability and
$\lambda_1(\bar A)=\lambda$ by \eqref{eq:quotient-top}, we get  $\int\bar\phi_A\,d\bar m=\lambda$. 
The spectral gap therefore makes the series
\[
 \bar U:=\sum_{j=0}^{\infty}P_{\bar A}^j(\bar\phi_A-\lambda)
\]
converge in $C^\alpha(\proj(V))$ for some $\alpha>0$.  Telescoping the
series gives~\eqref{eq:quotient-poisson}. Moreover, telescoping on $\proj(V)$ gives
\[
 \bar\phi_{A,n}-n\lambda
 =\sum_{j=0}^{n-1}P_{\bar A}^j(\bar\phi_A-\lambda)
 =\bar U-P_{\bar A}^n\bar U.
\]
Since $QA^n=\bar A^nQ$, composition with $[x]\mapsto[Qx]$ yields
\[
 \bar\phi_{A,n}([Qx])-n\lambda=u([x])-P_A^nu([x]),\qquad [x]\in X.
\]
Combining this identity with the radial identity \eqref{eq:radial-identity} proves
\eqref{eq:exact-coboundary}.
\end{proof}

\begin{lemma}\label{lem:drift}
There exist $N_0\ge1$, $p\in(0,1]$, $q_0\in(0,1)$, and $C_0>0$ such that
\begin{equation}\label{eq:drift}
 P_A^{N_0}R\le q_0R+C_0 \quad \text{with} \quad R:=r^{-p}\quad\text{on }X.
\end{equation}
Moreover, for every $q>0$ there are a multiple $N$ of $N_0$ and a
constant $C_q$ such that
\begin{equation}\label{eq:drift-small}
 P_A^NR\le qR+C_q \quad\text{on }X.
\end{equation}
\end{lemma}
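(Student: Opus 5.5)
The plan is to reduce the drift inequality to a repulsion estimate for the equator: on average, $\log r$ increases linearly along orbits that start close to $\proj(E)$. Using the identity $\log r([A^nx])-\log r([x])=\log\frac{\|\bar A^nQx\|}{\|Qx\|}-\log\frac{\|A^nx\|}{\|x\|}$ from \eqref{eq:radial-identity}, I will compare the two growth rates. The quotient term grows at rate $\lambda$ uniformly in $[Qx]\in\proj(V)$. Indeed, by Lemma~\ref{lem:quotient-poisson}, $\bar\phi_{A,n}=n\lambda+\bar U-P_{\bar A}^n\bar U\ge n\lambda-2\|\bar U\|_\infty$. The full term, for $[x]$ near $\proj(E)$, should grow at rate close to $\lambda_1(A|_E)<\lambda$.

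Concretely, I will fix $n$ and $\varepsilon=(\lambda-\lambda_1(A|_E))/4$, and choose $n$ so large that $\sup_{[y]\in\proj(E)}\frac1n\int\log\frac{\|A^ny\|}{\|y\|}\,d\mathbb P\le \lambda_1(A|_E)+\varepsilon$. This uniform upper bound holds for compactly supported cocycles by subadditivity: the function $\frac1n\int\log\|A^n|_E\|$ converges to $\lambda_1(A|_E)$. Continuity of $\phi_{A,n}$ then gives $\eta>0$ with $\phi_{A,n}\le n(\lambda_1(A|_E)+2\varepsilon)$ on $\{r<\eta\}$. Hence $\int\log\frac{r([A^n x])}{r([x])}d\mathbb P\ge n\varepsilon$ there. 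I will also use the bounded one-step distortion $|\log r([gx])-\log r([x])|\le C_n$ for $g\in\cK_n$, since $r([gx])/r([x])=\frac{\|\bar g Qx\|/\|Qx\|}{\|gx\|/\|x\|}\in[\Lambda^{-2n},\Lambda^{2n}]$.

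Next I convert the positive log-drift into the contraction $P_A^{N_0}R\le q_0R$ near the equator, with $R=r^{-p}$. Write $Y=\log\frac{r([A^nx])}{r([x])}$. Then $r([A^nx])^{-p}=r([x])^{-p}e^{-pY}$ with $|Y|\le C_n$ and $\mathbb E Y\ge n\varepsilon$. The elementary bound $e^{-pY}\le 1-pY+p^2Y^2e^{pC_n}$ gives $\mathbb E e^{-pY}\le 1-pn\varepsilon+p^2C_n^2e^{pC_n}$, which is at most $q_0<1$ once $p$ is small. So with $N_0=n$ we get $P_A^{N_0}R\le q_0R$ on $\{r<\eta\}$. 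On $\{r\ge\eta\}$, the distortion bound gives $P_A^{N_0}R\le \Lambda^{2np}\eta^{-p}=:C_0$. Together these yield \eqref{eq:drift}.

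For \eqref{eq:drift-small}, I iterate. Positivity of $P_A$ and $P_A^{N_0}1=1$ give $P_A^{kN_0}R\le q_0^kR+C_0\sum_{j<k}q_0^j\le q_0^kR+C_0/(1-q_0)$. I then choose $k$ with $q_0^k\le q$ and set $N=kN_0$.

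The main obstacle is the uniform upper bound on the growth in $E$ near $\proj(E)$. One needs the supremum over all directions of $E$, not just a typical direction, to approach $\lambda_1(A|_E)$. I handle this by bounding by the operator norm on $E$: $\int\log\|A^n|_E\|\,d\mathbb P/n\to\lambda_1(A|_E)$. The bound then transfers to a neighborhood by continuity of $\phi_{A,n}$, which holds because $\phi_{A,n}$ is continuous on all of $\proj(\RR^d)$.
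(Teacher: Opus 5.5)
Your argument is correct, and it takes a genuinely different route from the paper.

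The paper does not prove the drift inequality directly. It imports the repulsion of $\proj(E)$ from \cite[Prop.~7.1]{BMlc} and the resulting Lyapunov function $W=r^{-\gamma}$ from \cite[Prop.~5.3]{BMlc}. Its only own step is to lower the exponent to $p\le\min\{\gamma,1\}$ via the concavity bound $P_A^{N_0}(W^\vartheta)\le(P_A^{N_0}W)^\vartheta$, after which it iterates exactly as you do.

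You instead rebuild the repulsion estimate from scratch:
\begin{itemize}
\item a lower bound $\bar\phi_{A,n}\ge n\lambda-2\|\bar U\|_\infty$ that is uniform over all of $\proj(V)$. This uniformity is the essential point, since $[Qx]$ ranges over all of $\proj(V)$ as $[x]$ approaches $\proj(E)$;
\item the bound $\phi_{A,n}\le\int\log\|A^n|_E\|\,d\mathbb P$ on $\proj(E)$, propagated to a fixed neighborhood $\{r<\eta\}$ by continuity for one fixed $n$;
\item the almost-sure distortion bound $|Y|\le 2n\log\Lambda$ together with the second-order Taylor bound, which turns a positive mean log-drift into $\mathbb E\, e^{-pY}<1$ for small $p$.
\end{itemize}
This produces a small $p\in(0,1]$ directly, so no Jensen step is needed.

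What your route buys is self-containment. It uses only compact support and Lemma~\ref{lem:quotient-poisson}, which is proved earlier without the drift lemma, so there is no circularity. What the paper's route buys is reach: the cited results are stated under moment hypotheses and identify the repulsion of the equator as a general phenomenon. Your Taylor step, by contrast, leans on the almost-sure bound on $|Y|$ that compact support provides.

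Two small points to tidy up.
\begin{itemize}
\item When you fix $n$, you must also require $n\varepsilon\ge2\|\bar U\|_\infty$. Your bounds give $\int Y\,d\mathbb P\ge 2n\varepsilon-2\|\bar U\|_\infty$ on $\{r<\eta\}$, and this extra condition is what yields $\int Y\,d\mathbb P\ge n\varepsilon$.
\item State the distortion bound for $g=A^n(\omega)$, or for $g$ in the support of the $n$-step law of $\nu_0$, rather than for all $g\in\cK_n$. The quotient map $\bar g$ is only defined when $gE=E$.
\end{itemize}
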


\begin{proof}
By \cite[Prop.~7.1]{BMlc}, $\proj(E)$ is repelling. Compact support
ensures the moment hypothesis of \cite[Prop.~5.3]{BMlc}, so there are 
numbers $N_0\ge1$, $\gamma>0$, $\bar q\in(0,1)$ and $\bar C\ge0$ such that $P_A^{N_0}W\le \bar q W+\bar C$ where $W:=r^{-\gamma}$. Choose $0<p\le\min\{\gamma,1\}$ and put $\vartheta=p/\gamma\in(0,1]$.
Then $R=W^\vartheta$.  Since $s\mapsto s^\vartheta$ is concave, Jensen's
inequality and $(a+b)^\vartheta\le a^\vartheta+b^\vartheta$ give
$P_A^{N_0}R
 =P_A^{N_0}(W^\vartheta)
 \le (P_A^{N_0}W)^\vartheta 
 \le(\bar qW+\bar C)^\vartheta
 \le\bar q^\vartheta W^\vartheta+\bar C^\vartheta$.
Thus \eqref{eq:drift} holds with
$q_0=\bar q^\vartheta\in(0,1)$ and $C_0=\bar C^\vartheta$. Finally, iteration gives
\[
 P_A^{kN_0}R
 \le q_0^kR+C_0\sum_{j=0}^{k-1}q_0^j
 \le q_0^kR+\frac{C_0}{1-q_0}.
\]
Given $q>0$, choose $k$ so that $q_0^k\le q$.  Taking $N=kN_0$ and
$C_q=\frac{C_0}{1-q_0}$ proves~\eqref{eq:drift-small}.
\end{proof}
\section{Global regularizations and the upper bound}\label{sec:geometry}

The quotient spectral gap holds for all sufficiently small H\"older exponents.
Choose its exponent so that
$0<\alpha\le\min\{p,1/2\}$, where $0<p\le1$ was given in Lemma~\ref{lem:drift}. 

Lemma~\ref{lem:quotient-poisson} gives the coboundary $H=u+\log r$ on $X$.
Although $u$ is bounded, $H$ diverges at the equator. We construct global
H\"older regularizations of $H$ and use them to prove the upper estimate
$\lambda_1(B)-\lambda\le C\Delta(A,B)^{\theta_+}$.

\subsection{Regular extensions}  Since $\bar U\in C^\alpha(\proj(V))$ and $\proj(V)$ is compact, 
$u([x])=\bar U([Qx])$, for $[x]\in X$,
is bounded on $X$. In fact,
$\|u\|_\infty:= \sup_X |u| =\sup_{\proj(V)}|\bar U| <\infty$.
Since the reference cocycle $A$ is fixed throughout the proof, so are
$\bar U$ and $u$. Thus, the constants below may depend on
$[\bar U]_{\alpha}$ and $\|u\|_{\infty}$, but
they are independent of the perturbation $B$ and of the scale parameters.

\begin{lemma}\label{lem:quotient-holder}
There is $C>0$ such that for every $0<\varepsilon<1$
\begin{equation}\label{eq:u-holder-away}
 |u([x])-u([y])|
 \le C\varepsilon^{-2\alpha}\dproj([x],[y])^\alpha \quad \text{for any $[x],[y]\in X$ with $r([x]),r([y])\ge\varepsilon$}.
\end{equation}
\end{lemma}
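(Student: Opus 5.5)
Since $u([x])=\bar U([Qx])$ and $\bar U\in C^\alpha(\proj(V))$, the estimate \eqref{eq:u-holder-away} reduces to a Lipschitz-type bound for the map $[x]\mapsto[Qx]$ from $\{r\ge\varepsilon\}\subset X$ to $\proj(V)$. Concretely, the plan is to show
\[
 \dproj([Qx],[Qy])\le C\varepsilon^{-2}\dproj([x],[y])
 \qquad\text{whenever } r([x]),r([y])\ge\varepsilon .
\]
Given this, we obtain $|u([x])-u([y])|\le[\bar U]_\alpha\,\dproj([Qx],[Qy])^\alpha\le [\bar U]_\alpha C^\alpha\varepsilon^{-2\alpha}\dproj([x],[y])^\alpha$, which is the claim with constant $[\bar U]_\alpha C^\alpha$. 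This constant is independent of $\varepsilon$ because $[\bar U]_\alpha$ is fixed.

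To prove the projective bound, take unit representatives $x,y$. By \eqref{eq:r} and the formula $r([x])=\|Qx\|/\|x\|$, we have $\|Qx\|=r([x])\ge\varepsilon$ and $\|Qy\|\ge\varepsilon$. Since $Q$ is the orthogonal projection onto $E^\perp\cong V$, the wedge $Qx\wedge Qy=\wedge^2Q\,(x\wedge y)$ satisfies $\|Qx\wedge Qy\|\le\|x\wedge y\|$, because $\wedge^2Q$ is again an orthogonal projection and hence has norm at most $1$. Therefore
\[
 \dproj([Qx],[Qy])=\frac{\|Qx\wedge Qy\|}{\|Qx\|\,\|Qy\|}\le\frac{\|x\wedge y\|}{\varepsilon^2}=\varepsilon^{-2}\dproj([x],[y]),
\]
so the bound holds with $C=1$.

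The only point to check is the contraction property $\|\wedge^2Q\|\le1$. This follows from $\wedge^2Q$ being the orthogonal projection of $\wedge^2\RR^d$ onto $\wedge^2E^\perp$ with respect to the induced inner product. Alternatively, $\|Qx\wedge Qy\|$ is the area of the parallelogram spanned by the projected vectors, and orthogonal projection does not increase area.

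If $\dim V=1$, then $\bar U=0$ and $u$ is constant, so the statement is trivial. No hypothesis beyond $\varepsilon<1$ is needed, and that assumption only ensures that $\varepsilon^{-2\alpha}\ge1$ is the natural scale. I therefore expect no real obstacle.
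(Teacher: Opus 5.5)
Your proposal is correct and follows essentially the same route as the paper's proof. Both take unit representatives, use $\|Qx\|=r([x])\ge\varepsilon$, bound $\|Qx\wedge Qy\|$ through $\wedge^2Q$, divide by $\varepsilon^2$, and then compose with the $\alpha$-H\"older function $\bar U$. The only difference is that you observe $\wedge^2Q$ is an orthogonal projection, which sharpens the paper's unspecified bounded-operator constant to $1$.
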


\begin{proof}
Choose unit representatives $\|x\|=\|y\|=1$.  Then
$\|Qx\|=r([x]), \|Qy\|=r([y])\ge\varepsilon$.  
For the angular metric, $\dproj([Qx],[Qy]) = \|Qx\wedge Qy\| / (\|Qx\|\,\|Qy\|)$, cf.~\cite[Lem.~5.6.1]{BMbook}.  
The linear map $\wedge^2Q$ is bounded, so
$\|Qx\wedge Qy\|\le C\|x\wedge y\| \leq C \dproj([x],[y])$ for normalized representatives, cf.~\cite[Sec.~5.6]{BMbook}.   
Division by the two
lower bounds $\varepsilon$ proves %\begin{equation*}%\label{eq:quotient-lip}
$\dproj([Qx],[Qy])
 \le C\varepsilon^{-2}\dproj([x],[y])$.
Applying this with the
$\alpha$-H\"older continuity of $\bar U$, we obtain~\eqref{eq:u-holder-away}.
\end{proof}
For $s>0$ define on $\proj(\RR^d)$
\begin{equation}\label{eq:ells}
 \ell_s([x]):=\frac1p\log\bigl(r([x])^p+s^p\bigr).
\end{equation}
Observe that $\ell_s=\log s$ on $\proj(E)$. 
\begin{lemma}\label{lem:reg-log}
There is $C>0$ such that  for any $0<s<1$,  $\ell_s\in C^\alpha(\proj(\RR^d))$  and $\|\ell_s\|_{\alpha}\le Cs^{-1}$.
\end{lemma}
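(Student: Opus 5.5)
The function $\ell_s=\frac1p\log(r^p+s^p)$ is a composition $F_s\circ r$ with
\[
 F_s(t):=\tfrac1p\log(t^p+s^p),\qquad t\in[0,1].
\]
Since $r$ is $1$-Lipschitz for $\dproj$, it suffices to control $\|F_s\|_\infty$ and the $\alpha$-H\"older seminorm of $F_s$ on $[0,1]$. Indeed, $|\ell_s([x])-\ell_s([y])|\le[F_s]_{\alpha}\,|r([x])-r([y])|^\alpha\le[F_s]_\alpha\,\dproj([x],[y])^\alpha$. Continuity of $\ell_s$ on all of $\proj(\RR^d)$, including $\proj(E)$ where $r=0$, is automatic because $s>0$.

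For the sup norm, $s^p\le t^p+s^p\le 1+s^p\le 2$, so
\[
 \log s\le F_s(t)\le \tfrac{\log 2}{p}.
\]
This gives $\|\ell_s\|_\infty\le |\log s|+\frac{\log2}{p}\le C s^{-1}$, using $|\log s|\le s^{-1}$ for $0<s<1$.

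For the H\"older seminorm, I would interpolate between a Lipschitz bound and a sup bound. The derivative is $F_s'(t)=\frac{t^{p-1}}{t^p+s^p}$. Splitting into the cases $t\le s$ and $t\ge s$ shows $F_s'(t)\le t^{p-1}/\max(t,s)^p\le \max(t,s)^{-1}\le s^{-1}$. In the case $t\le s$ this uses $t^{p-1}\le s^{p}\,t^{-1}\cdot (t/s)^p\le\dots$, or more simply $t^{p-1}/(t^p+s^p)\le t^{p-1}/\max(t^p,s^p)$ combined with $t^{p-1}\le\max(t,s)^{p-1}$ when $t\ge s$. The case $t<s$ is the one delicate point, since $p\le1$ makes $t^{p-1}$ blow up at $0$. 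I would avoid it by instead using monotonicity and the concave structure directly. For $0\le t_1<t_2\le1$,
\[
 F_s(t_2)-F_s(t_1)=\tfrac1p\log\Bigl(1+\tfrac{t_2^p-t_1^p}{t_1^p+s^p}\Bigr)\le \tfrac1p\,\tfrac{(t_2-t_1)^p}{s^p},
\]
using $t_2^p-t_1^p\le (t_2-t_1)^p$, which holds by subadditivity of $t\mapsto t^p$, and $\log(1+a)\le a$. This yields a $p$-H\"older bound with constant $\frac1p s^{-p}$.

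Since $\alpha\le p$ and $|t_2-t_1|\le1$, we have $(t_2-t_1)^p\le(t_2-t_1)^\alpha$. Hence $[F_s]_\alpha\le \frac1p s^{-p}\le \frac1p s^{-1}$, because $0<s<1$ and $p\le1$. Combining this with the sup bound gives $\|\ell_s\|_\alpha\le C s^{-1}$ with $C$ depending only on $p$.

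The only step needing care is the behaviour near $t=0$, where $F_s$ is not uniformly Lipschitz in $s$. The subadditivity trick above handles it cleanly without differentiating, so I expect no real obstacle. The constant choice $\alpha\le p$ made at the start of Section~\ref{sec:geometry} is exactly what makes the argument close.
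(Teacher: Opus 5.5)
Your argument is correct and is essentially the paper's proof: the sup bound $\log s\le\ell_s\le p^{-1}\log 2$, and the H\"older bound via $|r([x])^p-r([y])^p|\le\dproj([x],[y])^p$ combined with the fact that $\log$ has derivative at most $s^{-p}$ on $[s^p,\infty)$ (your $\log(1+a)\le a$ step), followed by $\alpha\le p$. You should delete the abandoned derivative digression, because the bound $F_s'(t)\le s^{-1}$ claimed there is false for $t<s$ when $p<1$ (indeed $F_s'(t)\to\infty$ as $t\to0$).
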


\begin{proof}
Continuity at $\proj(E)$ follows immediately from \eqref{eq:ells}.  Since
$0<p\le1$, we have $|a^p-b^p|\le|a-b|^p$ for any $a,b\ge0$. Also, on $[s^p,\infty)$ the derivative of $\log$ is at most $s^{-p}$.
Therefore, using the Lipschitz continuity of $r$,
\[
 |\ell_s([x])-\ell_s([y])|
 \le \frac1p s^{-p}|r([x])^p-r([y])^p|
 \le Cs^{-p}\dproj([x],[y])^p.
\]
Because $\alpha\le p$ and $\proj(\RR^d)$ has bounded diameter, this implies an
$\alpha$-H\"older seminorm bounded by $Cs^{-p}$.
Since $0\le r\le1$,
\[
 \log s\le\ell_s\le\frac1p\log(1+s^p)\le\frac{\log2}{p}.
\]
Thus $\|\ell_s\|_\infty\le |\log s|+p^{-1}\log2\le Cs^{-p}$,
where we used $-\log s\le p^{-1}s^{-p}$ for $0<s<1$.
Combining both bounds gives
$\|\ell_s\|_\alpha\le Cs^{-p}\le Cs^{-1}$, since $0<p\le1$.
\end{proof}

Fix once and for all a smooth $\chi:[0,\infty)\to[0,1]$ with
\[
 \chi=0\ \text{on }[0,2],\qquad \chi=1\ \text{on }[3,\infty).
\]
For $\varepsilon>0$ set $\chi_\varepsilon([x])=\chi(r([x])/\varepsilon)$ for $[x]\in \proj(\RR^d)$. Define 
$$
   u_\varepsilon = \chi_\varepsilon u \ \ \text{on $X=P(\mathbb R^d)\setminus \proj(E)$ \quad and
   }\quad u_\varepsilon =0  \ \ \text{on $\proj(E)$}.
$$
\begin{lemma} \label{lem:cutoff-u} There is $C>0$ such that for any $\varepsilon >0$, $u_\varepsilon \in C^\alpha(\proj(\RR^d))$ and $
 \|u_\varepsilon\|_{\alpha}
 \le C\varepsilon^{-1}$. 
\end{lemma}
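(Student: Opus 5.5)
The plan is to bound the sup norm and the H\"older seminorm of $u_\varepsilon=\chi_\varepsilon u$ separately. Throughout, assume $0<\varepsilon<1/3$. For larger $\varepsilon$ the set $\{r\ge 2\varepsilon\}$ either is empty or stays a fixed distance from $\proj(E)$, so the same argument works with constants independent of $\varepsilon$. Since $\varepsilon^{-1}$ is bounded below on any bounded range of $\varepsilon$, and since for large $\varepsilon$ we simply have $u_\varepsilon\equiv0$ because $r\le1$, the bound $C\varepsilon^{-1}$ then follows in all cases.

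The sup bound is immediate: $0\le\chi_\varepsilon\le1$, so $\|u_\varepsilon\|_\infty\le\|u\|_\infty$. For the seminorm, first record two facts about the cutoff.
\begin{enumerate}
\item $\chi_\varepsilon$ is Lipschitz with constant $\|\chi'\|_\infty\varepsilon^{-1}$, because $r$ is $1$-Lipschitz.
\item $\chi_\varepsilon$ vanishes on $\{r<2\varepsilon\}$, and in particular on a neighborhood of $\proj(E)$.
\end{enumerate}
By (1) and the bounded diameter of $\proj(\RR^d)$, $\chi_\varepsilon$ is $\alpha$-H\"older with $[\chi_\varepsilon]_\alpha\le C\varepsilon^{-1}$. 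Here one uses $\dproj\le\dproj^\alpha\cdot\operatorname{diam}^{1-\alpha}$.

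Now take $[x],[y]\in\proj(\RR^d)$ and split into cases according to where they sit relative to $\supp\chi_\varepsilon\subset\{r\ge2\varepsilon\}$.

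\emph{Both outside the support.} If $r([x]),r([y])<2\varepsilon$, or either point lies in $\proj(E)$ with the other in the zero region, both values vanish and there is nothing to prove.

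\emph{Both with $r\ge\varepsilon$.} Write
\[
u_\varepsilon([x])-u_\varepsilon([y])=\chi_\varepsilon([x])\bigl(u([x])-u([y])\bigr)+u([y])\bigl(\chi_\varepsilon([x])-\chi_\varepsilon([y])\bigr).
\]
The second term is at most $\|u\|_\infty C\varepsilon^{-1}\dproj([x],[y])^\alpha$. The first term is at most $C\varepsilon^{-2\alpha}\dproj([x],[y])^\alpha$ by Lemma~\ref{lem:quotient-holder}, applied at scale $\varepsilon$. Since $\alpha\le 1/2$, we have $\varepsilon^{-2\alpha}\le\varepsilon^{-1}$, and this is exactly where the choice $\alpha\le1/2$ enters.

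\emph{Mixed case.} Suppose $r([y])<\varepsilon$ and $r([x])\ge2\varepsilon$, or more generally $r([x])\ge 2\varepsilon$ while $[y]$ is far inside. Then $u_\varepsilon([y])=0$. Since $r$ is $1$-Lipschitz, $\dproj([x],[y])\ge r([x])-r([y])>\varepsilon$. Hence
\[
|u_\varepsilon([x])-u_\varepsilon([y])|\le\|u\|_\infty\le\|u\|_\infty\varepsilon^{-\alpha}\dproj([x],[y])^\alpha\le C\varepsilon^{-1}\dproj([x],[y])^\alpha.
\]
The remaining configurations have either both values zero, or both points with $r\ge\varepsilon$, so they fall under the previous two cases. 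Continuity at $\proj(E)$ holds because $u_\varepsilon$ vanishes on the neighborhood $\{r<2\varepsilon\}$, and membership in $C^\alpha$ then follows from the seminorm bound.

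The only genuinely delicate step is the product estimate near the cutoff region, where the H\"older constant of $u$ blows up like $\varepsilon^{-2\alpha}$. This is handled by Lemma~\ref{lem:quotient-holder} together with the constraint $\alpha\le1/2$. Everything else is bookkeeping of the case distinctions.
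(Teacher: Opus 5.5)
Your proof is correct and follows essentially the paper's argument: you bound $\|u_\varepsilon\|_\infty$ by $\|u\|_\infty$, and for the seminorm you use the same product-rule splitting with Lemma~\ref{lem:quotient-holder} and the $C\varepsilon^{-1}$ Lipschitz bound on $\chi_\varepsilon$, with $2\alpha\le1$ absorbing $\varepsilon^{-2\alpha}$ into $\varepsilon^{-1}$. The one small difference is the mixed case. The paper compares points on either side of $r=2\varepsilon$ directly through $\chi_\varepsilon([x])=0$ and the Lipschitz bound on $\chi_\varepsilon$, whereas you insert the buffer $\varepsilon\le r<2\varepsilon$, so that separated pairs satisfy $\dproj>\varepsilon$ and the crude bound $\|u\|_\infty$ suffices. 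Your restriction to $\varepsilon<1/3$ is unnecessary, since the argument works verbatim for $\varepsilon<1/2$.
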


\begin{proof}
For $\varepsilon\ge1/2$, $u_\varepsilon=0$ because $r\le1$. Assume $0<\varepsilon<1/2$. The extension $u_\varepsilon$ is continuous because $\chi_\varepsilon$ vanishes on the 
neighborhood $\{r\le2\varepsilon\}$.  
Since the Lipschitz norm of $\chi_\varepsilon$ is at most
$C\varepsilon^{-1}$, we have that  if $r([x]),r([y])>2\varepsilon$, then Lemma~\ref{lem:quotient-holder} gives
\[
\begin{aligned}
 |u_\varepsilon([x])-u_\varepsilon([y])|
 &\le \|\chi_\varepsilon\|_\infty |u([x])-u([y])|
      +\|u\|_\infty
       |\chi_\varepsilon([x])-\chi_\varepsilon([y])|\\
 &\le C\bigl(\varepsilon^{-2\alpha}+\varepsilon^{-1}\bigr)
       \dproj([x],[y])^\alpha
\end{aligned}
\]
where in the second term we used boundedness of $\proj(\RR^d)$ to replace
$\dproj$ by a fixed constant times $\dproj^\alpha$.
Suppose next that $r([x])\le2\varepsilon<r([y])$.  Then
$u_\varepsilon([x])=\chi_\varepsilon([x])=0$ and thus 
\[
 |\chi_\varepsilon([y])|=|\chi_\varepsilon([y])-\chi_\varepsilon([x])|
  \le C\varepsilon^{-1}\dproj([x],[y]).
\]
Thus $|u_\varepsilon([y])-u_\varepsilon([x])|\leq \|u\|_\infty |\chi_\varepsilon([y])|  
 \le C\varepsilon^{-1}\dproj([x],[y])^\alpha$.
The case with $[x],[y]$ interchanged is identical, and if both points
lie in $\{r\le2\varepsilon\}$ the difference is zero.  Hence
$[u_\varepsilon]_\alpha\le
C(\varepsilon^{-2\alpha}+\varepsilon^{-1})$, while
$\|u_\varepsilon\|_\infty\le\|u\|_\infty$.  Since $2\alpha\le1$ and $0<\varepsilon<1$, both terms are bounded
by $C\varepsilon^{-1}$ completing the proof.
\end{proof}

\subsection{A regularized coboundary inequality}
\label{sec:upper-half}

Choose a number $a>3$, to be fixed quantitatively in the proof below, and put
\begin{equation}\label{eq:h-eps-new}
 h_\varepsilon:= u_\varepsilon+\ell_{a\varepsilon}
 \qquad \text{for $0<\varepsilon<1/a$}.
\end{equation}
By Lemmas~\ref{lem:reg-log} and \ref{lem:cutoff-u}, $h_\varepsilon\in C^\alpha(\proj(\RR^d))$ and
\begin{equation}\label{eq:h-eps-kappa}
 \|h_\varepsilon\|_{\alpha}\le C\varepsilon^{-1}.
\end{equation}

\begin{proposition} \label{prop:barrier-reference-new}
There are  $N\ge1$, $C>0$, 
$a>3$ and $\varepsilon_0\in(0,1/a)$ such that 
\begin{equation}\label{eq:barrier-reference-new}
 \frac1N\phi_{A,N}-\lambda
 \le \frac1N(h_\varepsilon-P_A^Nh_\varepsilon)
     -\frac1N\one_{\{r\le3\varepsilon\}}+C\varepsilon^p
 \quad\text{on }\proj(\RR^d) \ \ \text{for every
$\varepsilon\in (0,\varepsilon_0)$}.
\end{equation}
\end{proposition}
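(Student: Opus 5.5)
The plan is to subtract the exact coboundary of Lemma~\ref{lem:quotient-poisson} and reduce the statement to a one-sided drift inequality for the \emph{defect} $D:=H-h_\varepsilon$ on $X$. By \eqref{eq:exact-coboundary}, on $X$ we have $\phi_{A,N}-N\lambda=H-P_A^NH$. Multiplying \eqref{eq:barrier-reference-new} by $N$, it therefore suffices to show
\[
 D-P_A^ND\le -\one_{\{r\le3\varepsilon\}}+CN\varepsilon^p\quad\text{on }X.
\]
Since $N$ is fixed once and for all, $CN$ is again a constant. Once this holds on $X$, we pass to $\proj(E)$ by density of $X$ and continuity. Indeed $\phi_{A,N}$, $h_\varepsilon$ and $P_A^Nh_\varepsilon$ are continuous, and $-\one_{\{r\le3\varepsilon\}}$ is upper semicontinuous with value $-1$ near $\proj(E)$, so the inequality survives the limit.

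Next I would split the defect as $D=D_u+D_\ell$. The first piece is $D_u=(1-\chi_\varepsilon)u$, which is supported in $\{r\le3\varepsilon\}$ and satisfies $|D_u|\le\|u\|_\infty$. The second is
\[
 D_\ell=\ell-\ell_{a\varepsilon}=-\tfrac1p\log\bigl(1+a^p\varepsilon^pR\bigr)\le0,
\]
with $R=r^{-p}$. The key device for controlling $-P_A^ND_\ell$ is Jensen's inequality for the concave function $t\mapsto\log(1+t)$, combined with \eqref{eq:drift-small}:
\[
 -P_A^ND_\ell\le\tfrac1p\log\bigl(1+a^p\varepsilon^pP_A^NR\bigr)\le\tfrac1p\log\bigl(1+a^p\varepsilon^p(qR+C_q)\bigr).
\]
For the $u$-piece I would use $-P_A^ND_u\le\|u\|_\infty P_A^N\one_{\{r\le3\varepsilon\}}\le\|u\|_\infty(3\varepsilon)^pP_A^NR\le\|u\|_\infty3^p\varepsilon^p(qR+C_q)$. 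Here $\one_{\{r\le3\varepsilon\}}\le(3\varepsilon)^pR$.

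The two regions are then treated separately, with a logarithmic comparison in each. Write $t=a^p\varepsilon^pR(x)$.
\begin{itemize}[leftmargin=*]
\item On $\{r>3\varepsilon\}$ we have $D_u(x)=0$ and $t\le(a/3)^p$, so $\log(1+t)\ge c_at$ with $c_a=\log(1+(a/3)^p)/(a/3)^p$. Also $\log(1+t')\le t'$. Hence
\[
 D-P_A^ND\le\Bigl(-\tfrac{c_a}{p}+q\bigl(\tfrac1p+\|u\|_\infty3^pa^{-p}\bigr)\Bigr)a^p\varepsilon^pR+C\varepsilon^p.
\]
Once $a$ is fixed, choosing $q$ small makes the bracket nonpositive, which leaves $\le C\varepsilon^p$.
\item On $\{r\le3\varepsilon\}$ we have $t\ge(a/3)^p$ and $D_u(x)\le\|u\|_\infty$. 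The $\ell$-terms combine to
\[
 \tfrac1p\log\frac{1+qt+a^p\varepsilon^pC_q}{1+t}\le\tfrac1p\log\bigl(q+(1+a^p\varepsilon^pC_q)(3/a)^p\bigr).
\]
The $u$-contribution $\|u\|_\infty3^p\varepsilon^pqR$ is not small here. I would absorb it by bounding $q\varepsilon^pR\le q\,t\,a^{-p}$ and using part of the negative logarithm. Alternatively, I would estimate $P_A^N\one_{\{r\le3\varepsilon\}}\le1$ directly, so that this contribution is at most $\|u\|_\infty$.
\end{itemize}
The total on $\{r\le3\varepsilon\}$ is then at most
\[
 2\|u\|_\infty+\tfrac1p\log\bigl(q+2(3/a)^p\bigr)\qquad\text{for }\varepsilon<\varepsilon_0,
\]
where $\varepsilon_0$ is chosen so that $a^p\varepsilon_0^pC_q\le1$. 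Taking $a$ large and $q$ small makes this $\le-1$, which is the required bound there.

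The main obstacle is the order of constants. The constant $a$ must be large enough for the region near the equator, $q$ must then be small relative to $c_a$ (which degrades like $\log a/a^p$), and $N$ and $C_q$ come from Lemma~\ref{lem:drift} after $q$ is chosen. Finally $\varepsilon_0$ is chosen last in terms of $a$ and $C_q$. The other delicate point is the $u$-piece in the region $\{r>3\varepsilon\}$ but close to $3\varepsilon$: there $\varepsilon^pR$ is of order one. This is why the linear gain $c_at$ from $D_\ell(x)$ is needed, and why $q$ must be chosen after $a$.
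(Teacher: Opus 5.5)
Your proposal is correct and follows the paper's proof. It uses the same splitting $H-h_\varepsilon=(1-\chi_\varepsilon)u-(\ell_{a\varepsilon}-\ell)$, Jensen plus the small-$q$ drift \eqref{eq:drift-small} for the logarithmic piece, the bound $\one_{\{r\le3\varepsilon\}}\le\min\{1,(3\varepsilon)^pR\}$ for the cutoff piece, and a two-region analysis with $a$ fixed before $q$, $N$, $C_q$ and $\varepsilon_0$; the paper simply packages your explicit estimates into the function $J_q(z)=\frac1p\log\frac{1+z}{1+qz}$ and a claim about it.

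Two small points. Near the equator only your second option (bounding $P_A^N\one_{\{r\le3\varepsilon\}}$ by $1$) works, since $\|u\|_\infty(3/a)^pq\,t$ grows linearly in $t$ while the logarithmic gain stays bounded. Also, $-\one_{\{r\le3\varepsilon\}}$ is lower rather than upper semicontinuous, but its local constancy near $\proj(E)$ is all the extension step needs.
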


\begin{proof}
On $X$ put
\[
 G_\varepsilon:=(1-\chi_\varepsilon)u,
 \qquad
 L_{a\varepsilon}:=\ell_{a\varepsilon}-\ell
 =\frac1p\log\bigl(1+(a\varepsilon)^pR\bigr),
\]
where $R=r^{-p}$ is the Lyapunov function in Lemma~\ref{lem:drift}. Since $H-h_\varepsilon=G_\varepsilon-L_{a\varepsilon}$,
the exact coboundary identity gives
\begin{equation}\label{eq:barrier-exact-new}
 \phi_{A,N}-N\lambda
 =h_\varepsilon-P_A^Nh_\varepsilon
 +(G_\varepsilon-P_A^NG_\varepsilon)
 -(L_{a\varepsilon}-P_A^NL_{a\varepsilon}).
\end{equation}

Set $M_u:=\max\{1,\|u\|_\infty\}$ and, for $0<q<1$, define
\[
 J_q(z):=\frac1p\log\frac{1+z}{1+qz},\qquad z\ge0.
\]
\begin{claim} \label{eq:J-inner-new} There are  $a>3$ and $q \in (0,1)$ such that 
\begin{align*}
 M_u\min\{1, (3/a)^p qz\} \le J_q(z)\quad \text{for $z\ge0$} \quad  \text{and} \quad 
 J_q(z)\ge2M_u+1\quad \text{for $z\ge (a/3)^p $}.
\end{align*}
\end{claim}
\begin{proof} Direct differentiation gives 
\[
 J_q'(z)=\frac{1-q}{p(1+z)(1+qz)}>0,\qquad
 J_q''(z)=-\frac{(1-q)(1+q+2qz)}{p(1+z)^2(1+qz)^2}<0.
\]
Thus $J_q$ is increasing and concave with $J_q(0)=0$.  Hence, for $z\geq (a/3)^p$,  
\[
 J_q(z)\geq J_q((a/3)^p)\longrightarrow\frac1p\log(1+(a/3)^p)
 \qquad \text{as $q\downarrow0$}.
\]
Choosing $a>3$ such that $\frac1p \log(1+(a/3)^p)>2M_u+2$ we get the second inequality for small $q>0$.
To show the first inequality, put $Z_q=(c_aq)^{-1}$ with $c_a=(3/a)^p$ and note that
\[
 J_q(Z_q)=\frac1p\log\frac{1+(c_aq)^{-1}}{1+c_a^{-1}}
 \longrightarrow\infty \qquad \text{as $q\downarrow0$}.
\]
Decrease $q>0$ so that $J_q(Z_q)\ge M_u$. By concavity on $[0,Z_q]$
and monotonicity on $[Z_q,\infty)$, 
\[
 J_q(z)\ge\frac z{Z_q}J_q(Z_q)\ge M_uc_aqz,
 \qquad
 J_q(z)\ge J_q(Z_q)\ge M_u,
\]
which proves the first inequality and concludes the proof of the claim.
\end{proof}
By Lemma~\ref{lem:drift}, we can choose a multiple $N$ of its block length
such that $P_A^NR\le qR+C_q$. For $z=(a\varepsilon)^pR$, Jensen's inequality yields
\[
 P_A^NL_{a\varepsilon}
 \le\frac1p\log\bigl(1+(a\varepsilon)^pP_A^NR\bigr) \le\frac1p\log\bigl(1+qz+C_q(a\varepsilon)^p\bigr)
 \le\frac1p\log(1+qz)+C\varepsilon^p.
\]
Consequently,
$P_A^NL_{a\varepsilon}-L_{a\varepsilon}\le -J_q(z)+C\varepsilon^p$.

On the other hand,
$|G_\varepsilon|\le M_u\one_{\{r\le3\varepsilon\}}$ and
$\one_{\{r\le3\varepsilon\}}\le(3\varepsilon)^pR$, so
\[
 P_A^N\one_{\{r\le3\varepsilon\}}
 \le\min\{1,(3\varepsilon)^pP_A^NR\}\le\min\{1,(3/a)^p qz+C\varepsilon^p\}
 \le\min\{1,(3/a)^p qz\}+C\varepsilon^p.
\]
It follows that $G_\varepsilon-P_A^NG_\varepsilon
 \le M_u\one_{\{r\le3\varepsilon\}}
     +M_u\min\{1,(3/a)^p qz\}+C\varepsilon^p$. Hence
\[
(G_\varepsilon-P_A^NG_\varepsilon)
 -(L_{a\varepsilon}-P_A^NL_{a\varepsilon})
\le M_u\one_{\{r\le3\varepsilon\}}
 +M_u\min\{1,(3/a)^p qz\}-J_q(z)+C\varepsilon^p.
\]
On $\{r>3\varepsilon\}$, the first inequality in Claim~\ref{eq:J-inner-new} bounds the right-hand side by $C\varepsilon^p$. On $\{r\le3\varepsilon\}$, we have
$z=(a\varepsilon/r)^p\ge(a/3)^p$, and now the second inequality in Claim~\ref{eq:J-inner-new} bounds it by
$2M_u-(2M_u+1)+C\varepsilon^p=-1+C\varepsilon^p$. Thus
\begin{equation*}%\label{eq:barrier-sign-new}
 (G_\varepsilon-P_A^NG_\varepsilon)
 -(L_{a\varepsilon}-P_A^NL_{a\varepsilon})
 \le-\one_{\{r\le3\varepsilon\}}+C\varepsilon^p
 \qquad\text{on }X.
\end{equation*}
Substituting into \eqref{eq:barrier-exact-new} and dividing by $N$ gives~\eqref{eq:barrier-reference-new} on $X$. Since the function
$\phi_{A,N}-N\lambda-(h_\varepsilon-P_A^Nh_\varepsilon)$
is continuous on $\proj(\RR^d)$, approximation from
$X\cap\{r<3\varepsilon\}$ extends the inequality to $\proj(E)$. 
This proves \eqref{eq:barrier-reference-new} on $\proj(\RR^d)$.
\end{proof}

\begin{corollary}\label{cor:master-upper-new}
Let $B$ be sufficiently close to $A$, put
$\delta:=\Delta(A,B)$, and let $m$ be any $B$-stationary maximizing
projective probability.  Then, for every
$0<\varepsilon<\varepsilon_0$,
\begin{equation}\label{eq:master-upper-new}
 \lambda_1(B)-\lambda
 \le -\frac1N m(\{r\le3\varepsilon\})
      +C\varepsilon^p+C\delta^\alpha\varepsilon^{-1}.
\end{equation}
Consequently,
\begin{equation}\label{eq:upper-holder-new}
 \lambda_1(B)-\lambda\le C\delta^{\theta_+},
 \qquad
 \theta_+:=\frac{\alpha p}{p+1},
\end{equation}
and
\begin{equation}\label{eq:tube-upper-new}
 m(\{r\le3\varepsilon\})
 \le N\bigl(\lambda-\lambda_1(B)\bigr)
      +C\bigl(\varepsilon^p+\delta^\alpha\varepsilon^{-1}\bigr).
\end{equation}
\end{corollary}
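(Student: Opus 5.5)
The plan is to integrate the reference inequality \eqref{eq:barrier-reference-new} of Proposition~\ref{prop:barrier-reference-new} against the $B$-stationary maximizing measure $m$, and to pay for replacing $A$ by $B$ using Lemma~\ref{lem:fixed-pert}. Since $m$ is maximizing for $B$, we have $\int\phi_{B,N}\,dm=N\lambda_1(B)$. Since $m$ is $P_B$-stationary, $\int(h_\varepsilon-P_B^Nh_\varepsilon)\,dm=0$ because $h_\varepsilon$ is continuous on $\proj(\RR^d)$. Therefore
\[
 \lambda_1(B)-\lambda
 =\frac1N\int(\phi_{B,N}-\phi_{A,N})\,dm+\int\Bigl(\frac1N\phi_{A,N}-\lambda\Bigr)dm .
\]
The first term is bounded by $C_N\delta/N$, using the first estimate of \eqref{eq:fixed-operator}.

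For the second term we integrate \eqref{eq:barrier-reference-new} against $m$. This gives
\[
 \int\Bigl(\frac1N\phi_{A,N}-\lambda\Bigr)dm\le\frac1N\int(h_\varepsilon-P_A^Nh_\varepsilon)\,dm-\frac1N m(\{r\le3\varepsilon\})+C\varepsilon^p .
\]
We then rewrite $\int(h_\varepsilon-P_A^Nh_\varepsilon)\,dm$ as $\int(P_B^N-P_A^N)h_\varepsilon\,dm$, again using stationarity under $P_B$. By the second estimate of \eqref{eq:fixed-operator} and \eqref{eq:h-eps-kappa}, this is at most $C_N\delta^\alpha\|h_\varepsilon\|_\alpha\le C\delta^\alpha\varepsilon^{-1}$. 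Since $N$ is fixed in Proposition~\ref{prop:barrier-reference-new}, all $C_N$ are absorbed into $C$. We may take $\delta<1$, so $\delta\le\delta^\alpha\le\delta^\alpha\varepsilon^{-1}$ and the $C\delta$ term is absorbed as well. This proves \eqref{eq:master-upper-new}, and rearranging it gives \eqref{eq:tube-upper-new} directly.

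For \eqref{eq:upper-holder-new}, we drop the nonpositive term $-\frac1N m(\{r\le 3\varepsilon\})$ and optimize $\varepsilon^p+\delta^\alpha\varepsilon^{-1}$. Balancing the two terms suggests $\varepsilon=\delta^{\alpha/(p+1)}$, which gives $2\delta^{\alpha p/(p+1)}=2\delta^{\theta_+}$. This choice is admissible ($\varepsilon<\varepsilon_0$) once $\delta$ is small, which amounts to shrinking the neighborhood of $A$.

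The only subtle point is justifying $\int(h_\varepsilon-P_B^Nh_\varepsilon)\,dm=0$, and the rest is bookkeeping. Since $m$ may charge $\proj(E)$, we need $h_\varepsilon$ to be a genuine continuous function on all of $\proj(\RR^d)$, not merely on $X$. For the same reason, \eqref{eq:barrier-reference-new} must hold on all of $\proj(\RR^d)$. Both facts were arranged earlier: $h_\varepsilon$ is continuous by Lemmas~\ref{lem:reg-log} and~\ref{lem:cutoff-u}, and the global form of \eqref{eq:barrier-reference-new} is the continuity extension at the end of the proof of Proposition~\ref{prop:barrier-reference-new}.
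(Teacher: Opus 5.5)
Your proposal is correct and follows essentially the same route as the paper. You integrate \eqref{eq:barrier-reference-new} against $m$, use maximality and $P_B^N$-stationarity to replace $\phi_{A,N}$ and $P_A^N h_\varepsilon$ by their $B$-counterparts, bound the errors with Lemma~\ref{lem:fixed-pert} and \eqref{eq:h-eps-kappa}, absorb the $C\delta$ term, and then either choose $\varepsilon=\delta^{\alpha/(p+1)}$ or rearrange.
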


\begin{proof}
Integrate
\eqref{eq:barrier-reference-new}.  Add and subtract
$\phi_{B,N}$ and $P_B^Nh_\varepsilon$.  Since $m$ is $P_B^N$-stationary,
\[
 \int(h_\varepsilon-P_B^Nh_\varepsilon)\,dm=0.
\]
Therefore
\[
\begin{aligned}
 \lambda_1(B)-\lambda
 &\le -\frac1N m(\{r\le3\varepsilon\})+C\varepsilon^p  +\frac1N\|\phi_{B,N}-\phi_{A,N}\|_\infty
 +\frac1N\|(P_B^N-P_A^N)h_\varepsilon\|_\infty.
\end{aligned}
\]
Lemma~\ref{lem:fixed-pert} and \eqref{eq:h-eps-kappa} give
\eqref{eq:master-upper-new}.
Dropping the negative term and choosing
$\varepsilon=\delta^{\alpha/(p+1)}$ makes the two positive errors have
the same order $\delta^{\alpha p/(p+1)}$.  This proves
\eqref{eq:upper-holder-new}.  On the other hand, moving the negative mass
term in \eqref{eq:master-upper-new} to the left gives
\eqref{eq:tube-upper-new}.
\end{proof}

\section{The contradiction sequence and stationary mass}
\label{sec:bad-sequence}\label{sec:defect-concentration}

Corollary~\ref{cor:master-upper-new} gives
$\lambda_1(B)\le\lambda+C\delta^{\theta_+}$, where $\delta=\Delta(A,B)$.
If $D:=\lambda-\lambda_1(B)\le0$, then
$0\le\lambda_1(B)-\lambda\le C\delta^{\theta_+}$, so the required estimate
already holds. It remains to prove a lower bound when $D>0$.
Suppose, for a contradiction, that no constants $C,\theta,r_0>0$ satisfy
\[
 \lambda_1(B)\ge\lambda-C\Delta(A,B)^\theta
 \qquad\text{whenever }\Delta(A,B)<r_0.
\]

\subsection{The contradiction sequence}

\begin{lemma}\label{lem:superpoly-new}
If the lower H\"older estimate fails at the fixed cocycle $A$, there are
cocycles $B_k$ on the fixed Bernoulli base such that
\begin{equation}\label{eq:superpoly-new}
 \delta_k:=\Delta(A,B_k)\longrightarrow0,
 \qquad
 D_k:=\lambda-\lambda_1(B_k)>0,
 \qquad
 \frac{D_k}{\delta_k^q}\longrightarrow\infty
 \quad\text{for every }q>0.
\end{equation}
\end{lemma}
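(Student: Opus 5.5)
This is a negation-of-quantifiers argument combined with a diagonal extraction. Failure of the lower estimate means: for every choice of $C,\theta,r_0>0$ there is a cocycle $B$ on the fixed Bernoulli base with $\Delta(A,B)<r_0$ and $\lambda_1(B)<\lambda-C\Delta(A,B)^\theta$.

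For each $k\ge1$, apply this with $\theta=1/k$, $C=k$ and $r_0=1/k$. We also shrink $r_0$ further if needed, so that $B$ lies in the neighborhood where \eqref{eq:Lambda} and Corollary~\ref{cor:master-upper-new} hold. This yields $B_k$ with
\[
\delta_k:=\Delta(A,B_k)<1/k
\qquad\text{and}\qquad
D_k:=\lambda-\lambda_1(B_k)>k\,\delta_k^{1/k}\ge0 .
\]
Hence $\delta_k\to0$ and $D_k>0$.

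One point needs checking: whether $\delta_k=0$ can occur. In that case $D_k>0$ holds trivially and the ratio $D_k/\delta_k^q$ is $+\infty$. This case cannot actually happen. Indeed, \eqref{eq:law-nonexpansion} then gives $\delta_{\cT}(\nu_0,\nu_{B_k})=0$, so $\nu_{B_k}=\nu_0$ and $\lambda_1(B_k)=\lambda$, which contradicts $D_k>0$. So $\delta_k>0$ for every $k$, and the ratios are finite positive numbers.

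Now fix $q>0$. For $k\ge 1/q$ we have $\delta_k<1$ and $1/k\le q$, so $\delta_k^{q}\le\delta_k^{1/k}$. Therefore
\[
\frac{D_k}{\delta_k^q}\;\ge\;\frac{D_k}{\delta_k^{1/k}}\;>\;k\;\longrightarrow\;\infty .
\]
This gives \eqref{eq:superpoly-new} for every $q>0$ simultaneously, with no further subsequence needed.

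The only mildly delicate points are the exclusion of $\delta_k=0$ (handled above) and the fact that the $B_k$ live on the fixed Bernoulli base $\Omega$. The latter holds by construction, because the failure hypothesis is formulated in terms of the target estimate \eqref{eq:fixed-reference-target}, that is, for cocycles $B$ on this base. Everything else is routine.
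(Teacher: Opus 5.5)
Your proof is correct and follows the paper's argument exactly: the same choice $\theta=1/k$, $C=k$, $r_0=1/k$, and the same comparison $\delta_k^q\le\delta_k^{1/k}$ for large $k$. Your extra check that $\delta_k=0$ cannot occur, via \eqref{eq:law-nonexpansion}, fills in a detail the paper leaves implicit when it writes $0<\delta_k$.
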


\begin{proof}
For each integer $k\ge1$, use the failure of the lower estimate with exponent
$1/k$, constant $k$, and radius $1/k$.  Choose $B_k$ so that
\[
 0<\delta_k<1/k,
 \qquad
 D_k>k\delta_k^{1/k}.
\]
Fix $q>0$.  For all large $k$, $1/k<q$ and $0<\delta_k<1$, so
$\delta_k^{1/k}\ge\delta_k^q$.  Therefore
\[
 \frac{D_k}{\delta_k^q}
 >k\delta_k^{1/k-q}\ge k\longrightarrow\infty. \qedhere
\]
\end{proof}

\subsection{Generic approximation}

Following \cite[Sec.~5.1]{AEV}, we call a
probability measure on $G$ \emph{generic} if it gives zero mass to every
proper algebraic subvariety of $G$.  Such a measure automatically has
Zariski-dense support: otherwise its support would be contained in one proper
algebraic subvariety, which would have measure one.

By \cite[Prop.~5.3]{AEV}, every compactly supported law $\nu$
can be approximated by compactly supported generic laws with Zariski-dense
support. Since $\delta_{\mathcal T}$ induces the topology used there,
for every $\varepsilon>0$ such an approximant $\nu'$ can be chosen with
$\delta_{\mathcal T}(\nu',\nu)<\varepsilon$.

\begin{lemma}\label{lem:usc-new}
The map $\nu\mapsto\lambda_1(\nu)$ is upper semicontinuous for
$\delta_{\mathcal T}$ on $\Pcpt(G)$.
\end{lemma}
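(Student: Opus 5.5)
The plan is the standard subadditivity argument. By definition,
\[
 \lambda_1(\nu)=\lim_{n\to\infty}\frac1n\int_{G^n}\log\|g_n\cdots g_1\|\,d\nu^{\otimes n}
 =\inf_{n\ge1}\frac1n a_n(\nu),
 \qquad a_n(\nu):=\int_{G^n}\log\|g_n\cdots g_1\|\,d\nu^{\otimes n}.
\]
The limit equals the infimum because $a_{n+m}(\nu)\le a_n(\nu)+a_m(\nu)$. This follows from submultiplicativity of the norm together with the product structure of $\nu^{\otimes(n+m)}$, and Fekete's lemma then applies. Since an infimum of continuous functions is upper semicontinuous, it suffices to show that, for each fixed $n$, the map $\nu\mapsto a_n(\nu)$ is continuous for $\delta_{\cT}$ on $\Pcpt(G)$.

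To prove this continuity, fix $\nu$ and let $\nu_k\to\nu$ in $\delta_{\cT}$. I would argue as follows.

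\emph{Step 1 (common compact set).} Hausdorff convergence of the supports gives a compact $\cK\subset G$ containing $\supp\nu$ and all $\supp\nu_k$ for large $k$. Indeed, the union of a convergent sequence of compact sets with its limit is compact. This uses that $\delta$ is a complete metric inducing the topology of $G$; the term $\|g^{-1}-h^{-1}\|$ keeps inverses bounded, so Cauchy sequences do not escape to singular matrices.

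\emph{Step 2 (weak convergence of products).} Wasserstein convergence implies weak convergence $\nu_k\to\nu$. Hence $\nu_k^{\otimes n}\to\nu^{\otimes n}$ weakly on $G^n$, all supported in $\cK^n$.

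\emph{Step 3 (pass to the limit).} The function $(g_1,\dots,g_n)\mapsto\log\|g_n\cdots g_1\|$ is continuous on $G^n$. It is bounded on $\cK^n$ by $n\log\Lambda$ in absolute value, with $\Lambda$ as in \eqref{eq:Lambda} for $\cK$. Multiplying it by a continuous compactly supported cutoff equal to $1$ on $\cK^n$ gives a bounded continuous test function that agrees with the integrand on all the supports. Therefore $a_n(\nu_k)\to a_n(\nu)$.

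Alternatively, Step 3 can be made quantitative. The integrand is Lipschitz for $\delta$ on $\cK^n$, by the same telescoping estimate as in Lemma~\ref{lem:product-pert} and Lemma~\ref{lem:fixed-pert}. The coupling realization \eqref{eq:BM-realization} then gives $|a_n(\nu_k)-a_n(\nu)|\le C_n\,\delta_{\cT}(\nu_k,\nu)$ directly.

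The only mildly delicate point is Step 1: one must justify that the supports stay in a fixed compact subset of $G$, rather than merely a bounded subset of matrices. This is exactly where the choice of $\delta$, including the inverse term, is used. Everything else is routine.

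Combining the steps, $\limsup_k\lambda_1(\nu_k)\le\lim_k\frac1n a_n(\nu_k)=\frac1n a_n(\nu)$ for every $n$. Taking the infimum over $n$ yields $\limsup_k\lambda_1(\nu_k)\le\lambda_1(\nu)$, which is the claimed upper semicontinuity.
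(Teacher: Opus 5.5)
Your proof is correct and follows the same route as the paper: subadditivity and Fekete's lemma give $\lambda_1=\inf_n \frac1n F_n$, and an infimum of continuous functions is upper semicontinuous. You also justify the continuity of $\nu\mapsto F_n(\nu)$ for $\delta_{\mathcal T}$, which the paper leaves implicit; your Step~1 does not actually need completeness of $\delta$ or the inverse term, since $K\cup\bigcup_k K_k$ is compact whenever compact sets $K_k\to K$ in the Hausdorff distance of any metric inducing the topology of $G$.
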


\begin{proof}
This is a folkloric result. Indeed,  let $F_n(\nu) = \int_{G^n} \log \|g_n \cdots g_1\| \, d\nu(g_1) \cdots d\nu(g_n)$. Since $F_{n+m}(\nu) \le F_n(\nu) + F_m(\nu)$, Fekete's lemma implies $\lambda_1(\nu) = \inf_{n \ge 1} \frac{1}{n} F_n(\nu)$. As the infimum of continuous functions $\nu \mapsto F_n(\nu)$, $\lambda_1$ is upper semicontinuous.
\end{proof}

\begin{lemma}\label{lem:generic-bad-new}
After replacing the sequence in Lemma~\ref{lem:superpoly-new} and relabeling,
we may assume that every law $\nu_k:=(B_k)_*\mathbb P$ is generic and
that
\begin{equation}\label{eq:superpoly-generic-new}
 \frac{D_k}{\delta_k^q}\longrightarrow\infty
 \qquad\text{for every }q>0.
\end{equation}
Moreover,
\begin{equation}\label{eq:Dk-to-zero-new}
 D_k\longrightarrow0.
\end{equation}
\end{lemma}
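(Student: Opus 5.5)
Starting from the sequence $B_k$ of Lemma~\ref{lem:superpoly-new}, we replace each law $\nu_k=(B_k)_*\mathbb P$ by a generic approximant and realize it on the fixed Bernoulli base. For each $k$, apply \cite[Prop.~5.3]{AEV} to obtain a compactly supported generic law $\nu_k'$ with $\delta_{\cT}(\nu_k',\nu_k)<\eta_k$, where $\eta_k>0$ is chosen below. By Lemma~\ref{lem:usc-new}, upper semicontinuity at $\nu_k$ lets us shrink $\eta_k$ so that, in addition,
\[
 \lambda_1(\nu_k')\le\lambda_1(\nu_k)+\delta_k^{k}.
\]
Hence $D_k':=\lambda-\lambda_1(\nu_k')\ge D_k-\delta_k^k$. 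Only this inequality is needed, because we only want $D_k'$ to stay large. Also take $\eta_k\le\delta_k^{k}$.

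Next, realize $\nu_k'$ through \eqref{eq:BM-realization} with $\zeta=\delta_k^k$. This gives a cocycle $B_k'$ on $\Omega$ with $(B_k')_*\mathbb P=\nu_k'$ and
\[
 \delta_k':=\Delta(A,B_k')\le\delta_{\cT}(\nu_0,\nu_k')+\delta_k^k\le\delta_{\cT}(\nu_0,\nu_k)+2\delta_k^k\le\delta_k+2\delta_k^k\le 3\delta_k,
\]
where the third inequality uses \eqref{eq:law-nonexpansion}. Thus $\delta_k'\to0$. For $k$ large we have $\delta_k<1$, and for any fixed $q>0$ and $k>q$,
\[
 \frac{D_k'}{(\delta_k')^q}\ge\frac{D_k-\delta_k^k}{3^q\delta_k^q}=3^{-q}\Bigl(\frac{D_k}{\delta_k^q}-\delta_k^{k-q}\Bigr)\longrightarrow\infty .
\]
In particular $D_k'>0$ for large $k$. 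One technical point: $\delta_k'$ might vanish. In that case $\nu_k'=\nu_0$ by \eqref{eq:law-nonexpansion}, so $D_k'=0$, which contradicts $D_k'>0$. We then discard the finitely many initial indices and relabel.

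For \eqref{eq:Dk-to-zero-new}, we use both directions. Upper semicontinuity (Lemma~\ref{lem:usc-new}) at $\nu_0$ gives $\limsup\lambda_1(\nu_k)\le\lambda$, which is consistent with $D_k>0$ but says nothing more. The needed direction is $\liminf\lambda_1(\nu_k)\ge\lambda$. This follows from the continuity theorem of Avila--Eskin--Viana \cite{AEV} in the $\delta_{\cT}$ topology, since $\delta_{\cT}(\nu_k,\nu_0)\le\Delta(A,B_k)=\delta_k\to0$. Hence $D_k\to0$.

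The main obstacle is this last step. It does not follow from the H\"older-type estimates proved so far, which give only upper bounds on $\lambda_1(B)$, so it must be imported from \cite{AEV}. The rest is bookkeeping: the errors introduced by the approximation and the realization are chosen of size $\delta_k^k$, which is superpolynomially small, so they cannot destroy the superpolynomial growth of $D_k/\delta_k^q$.
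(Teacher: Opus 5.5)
Your proof is correct and follows the same route as the paper: approximate $\nu_k$ by a generic law within the upper-semicontinuity radius at $\nu_k$, realize it on the fixed base via \eqref{eq:BM-realization}, check that the superpolynomial ratio survives, and import $D_k\to0$ from the continuity theorem of \cite{AEV}. The only difference is bookkeeping. The paper takes the errors to be $\tfrac14D_k$ and $\tfrac14\delta_k$, which gives $D_k'\ge\tfrac34D_k$ and $\Delta(A,B_k')\le\tfrac32\delta_k$ directly, whereas your additive errors $\delta_k^k$ must be absorbed using the superpolynomial growth, which you do correctly; your remark that $\Delta(A,B_k')>0$ because $D_k'>0$ is a small point the paper leaves implicit.
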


\begin{proof}
Start from Lemma~\ref{lem:superpoly-new} and write $\nu_k=(B_k)_*\mathbb P$.
By \eqref{eq:law-nonexpansion},
$\delta_{\mathcal T}(\nu_k,\nu_0)\le\delta_k$.  Upper semicontinuity from
Lemma~\ref{lem:usc-new} gives, for each $k$, a radius $\eta_k>0$ such that
\[
 \delta_{\mathcal T}(\mu,\nu_k)<\eta_k
 \quad\Longrightarrow\quad
 \lambda_1(\mu)\le\lambda_1(\nu_k)+\frac14D_k.
\]
By \cite[Prop.~5.3]{AEV}, choose a generic law $\nu_k'$ with
Zariski-dense support such that
\[
 \delta_{\mathcal T}(\nu_k',\nu_k)<\min\{\eta_k,\delta_k/4\}.
\]
Then
\[
 \lambda-\lambda_1(\nu_k')\ge\frac34D_k,
 \qquad
 \delta_{\mathcal T}(\nu_k',\nu_0)\le\frac54\delta_k.
\]

We now use the common-base realization~\cite[Lem.~7.6.2]{BMbook} applied to
$\nu_k'$ and the fixed realization $A$ to obtain a realization $B_k'$ of
$\nu_k'$ on the original Bernoulli base with
\[
 \Delta(A,B_k')
 \le \delta_{\mathcal T}(\nu_0,\nu_k')+\frac14\delta_k.
\]
Consequently, $\Delta(A,B_k')\le\frac32\delta_k$ and  $\lambda-\lambda_1(B_k')\ge\frac34D_k$.
Thus, for every fixed $q>0$,
\[
 \frac{\lambda-\lambda_1(B_k')}{\Delta(A,B_k')^q}
 \ge\frac{3}{4(3/2)^q}\frac{D_k}{\delta_k^q}
 \longrightarrow\infty.
\]
Relabeling $B_k'$ as $B_k$ gives~\eqref{eq:superpoly-generic-new}. 
Finally, \cite[Thm.~B]{AEV} gives qualitative continuity
of the Lyapunov exponents in the weak--Hausdorff compact-support topology.
Since $\delta_{\mathcal T}(\nu_k,\nu_0)\le\delta_k\to0$,
$\lambda_1(B_k)=\lambda_1(\nu_k)\to\lambda$.  Hence $D_k\to0$.  
\end{proof}

By \cite[Prop.~5.2]{AEV}, $\nu_k$ has a unique stationary
probability $m_k$ on $\proj(\RR^d)$. By the variational formula
\cite[Thm.~5.2.1]{BMbook}, $m_k$ is therefore maximizing.
We use these probabilities throughout the rest of the proof. We shall repeatedly use one elementary consequence of
\eqref{eq:superpoly-generic-new}. Namely,  
\begin{equation}\label{eq:superpoly-calculus-new}
 \delta_k^cD_k^{-M}=o(D_k) \quad \text{for every $c>0$ and every $M\ge0$.}
\end{equation}
Indeed, choose $q\in(0,\frac{c}{M+1})$.  Then
$\delta_k^q/D_k\to0$, and
$ \delta_k^c/D_k^{M+1}
 =\delta_k^{c-q(M+1)}
  \left(\frac{\delta_k^q}{D_k}\right)^{M+1}
 \to 0$.

\subsection{Stationary mass near the equator}

Choose once and for all $0<a<\alpha/4$ where $\alpha$ is the exponent fixed in Section~\ref{sec:geometry}. The goal of this subsection is to prove the following result: 

\begin{proposition}\label{prop:defect-mass-new}
There are constants $C>0$ and $c>0$ such that for all sufficiently large $k$,
\begin{equation}\label{eq:radial-comparable-new}
 cD_k\le m_k(\{r\le t\})\le CD_k
 \qquad
 \text{for every $t\in [\delta_k^{2a}, \delta_k^a]$}.
\end{equation}
\end{proposition}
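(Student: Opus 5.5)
The plan is to get the upper bound from the tube estimate \eqref{eq:tube-upper-new} already proved in Corollary~\ref{cor:master-upper-new}. The lower bound will come from a reverse inequality, which uses the exact coboundary \eqref{eq:exact-coboundary} and the regularization $h_\varepsilon$, but at a scale $\varepsilon$ much smaller than $t$. Both bounds rest on the superpolynomial smallness \eqref{eq:superpoly-calculus-new}: any positive power of $\delta_k$, even divided by a fixed power of $D_k$, is $o(D_k)$.

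\emph{Upper bound.} Apply \eqref{eq:tube-upper-new} to $B_k$ and $m_k$, which is maximizing, with $3\varepsilon=t\in[\delta_k^{2a},\delta_k^a]$. This gives
\[
 m_k(\{r\le t\})\le ND_k+C\bigl(t^p+\delta_k^\alpha t^{-1}\bigr)
 \le ND_k+C\bigl(\delta_k^{ap}+\delta_k^{\alpha-2a}\bigr).
\]
Since $\alpha>4a$, both exponents $ap$ and $\alpha-2a$ are positive. By \eqref{eq:superpoly-calculus-new} the error is $o(D_k)$, so $m_k(\{r\le t\})\le CD_k$ for large $k$.

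\emph{Lower bound.} I will prove a reverse barrier inequality. First I need a uniform bound on $h_\varepsilon-P_A^Nh_\varepsilon$, independent of $\varepsilon$. Each $g\in\cK_N$ distorts $r$ by at most a factor $\Lambda^{2N}$, so $\ell_s-\ell_s\circ g$ is bounded by $2N\log\Lambda$ uniformly in $s$, while $|u_\varepsilon|\le M_u$. Together with $\|\phi_{A,N}\|_\infty\le N\log\Lambda$, this gives a constant $C_1$ independent of $\varepsilon$ with
\[
 \Bigl|\phi_{A,N}-N\lambda-(h_\varepsilon-P_A^Nh_\varepsilon)\Bigr|\le C_1
 \quad\text{on }\proj(\RR^d).
\]
By \eqref{eq:barrier-exact-new}, the same quantity equals $(G_\varepsilon-P_A^NG_\varepsilon)-(L_{a\varepsilon}-P_A^NL_{a\varepsilon})$ on $X$. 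Put $K:=3\Lambda^{2N}$. Whenever $r([x])\ge Ks$ with $s\ge\varepsilon$, both $[x]$ and all $g[x]$ with $g\in\cK_N$ satisfy $r\ge 3s\ge 3\varepsilon$. There $G_\varepsilon$ and $P_A^NG_\varepsilon$ vanish, and $0\le L_{a\varepsilon}, P_A^NL_{a\varepsilon}\le C(\varepsilon/s)^p$. Hence
\[
 \phi_{A,N}-N\lambda\ge h_\varepsilon-P_A^Nh_\varepsilon-C_1\one_{\{r\le Ks\}}-C(\varepsilon/s)^p .
\]

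Now integrate this against $m_k$, replacing $P_A^N$ by $P_B^N$ and $\phi_{A,N}$ by $\phi_{B,N}$ exactly as in Corollary~\ref{cor:master-upper-new}, using Lemma~\ref{lem:fixed-pert} and \eqref{eq:h-eps-kappa}. This yields
\[
 -ND_k\ge -C_1m_k(\{r\le Ks\})-C(\varepsilon/s)^p-C\delta_k^\alpha\varepsilon^{-1}.
\]
Take $\varepsilon=\delta_k^{3a}$ and $s=t/K$ with $t\in[\delta_k^{2a},\delta_k^a]$. Then $(\varepsilon/s)^p\le C\delta_k^{ap}$ and $\delta_k^\alpha\varepsilon^{-1}=\delta_k^{\alpha-3a}$, and the last exponent is positive because $\alpha>4a$. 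So both errors are $o(D_k)$ by \eqref{eq:superpoly-calculus-new}, and $m_k(\{r\le t\})\ge (N/2C_1)D_k$ for large $k$.

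The main obstacle is the uniform (in $\varepsilon$) boundedness of the regularized defect in the tube: a bound of order $|\log\varepsilon|$ would only give $m_k\gtrsim D_k/|\log\delta_k|$. The bounded distortion of $r$ under $\cK_N$ is what removes this logarithm. The second delicate point is decoupling the regularization scale $\varepsilon=\delta_k^{3a}$ from the tube scale $t$. This separation makes $L_{a\varepsilon}$ negligible off the tube while keeping the perturbative error $\delta_k^\alpha\varepsilon^{-1}$ superpolynomially small, and that is exactly where the choice $a<\alpha/4$ is used.
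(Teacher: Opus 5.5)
Your proof is correct. The upper bound is the paper's argument verbatim: \eqref{eq:tube-upper-new} with $\varepsilon=t/3$.

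For the lower bound you choose the regularization scale differently from the paper. The paper introduces a new family $h_{D,t}=u_\rho+\ell_\rho$ with $\rho=tD_k^{2/p}$, tied to both the tube radius and the unknown defect. It bounds the error $e_{D,t}$ in two ways:
\begin{itemize}
\item globally, by bounded distortion of $r$ under $A^N$;
\item on $\{r\ge t\}$, by $CD_k^2$, using the drift of Lemma~\ref{lem:drift}.
\end{itemize}
It then pays a perturbative cost $\delta_k^\alpha t^{-1}D_k^{-2/p}$, which is $o(D_k)$ by \eqref{eq:superpoly-calculus-new} with $M=2/p$.

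You instead reuse the Section~\ref{sec:geometry} function $h_\varepsilon$ at the scale $\varepsilon=\delta_k^{3a}$, which is independent of $t$ and $D_k$. You shrink the good region by the distortion factor $K$, so that the whole $A^N$-orbit of a point with $r\ge t$ stays where $G_\varepsilon$ vanishes. This gives an outside error $O(\delta_k^{ap})$ and a perturbative error $\delta_k^{\alpha-3a}$, both $o(D_k)$ by \eqref{eq:superpoly-calculus-new} with only $M=0$.

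Your version has two advantages. It needs no auxiliary family and no drift estimate in the lower bound; in fact it works for any block length, since \eqref{eq:exact-coboundary} holds for every $n$. The cost is that every error term relies on superpolynomial smallness. In the paper, by contrast, the outside error $CD_k^2$ does not involve $\delta_k$ at all.

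One statement needs correcting. The distortion bound $\Lambda^{-2N}r\le r\circ g\le \Lambda^{2N}r$ does not hold for every $g\in\cK_N$. The set $\cK$ contains the perturbed matrices, which need not preserve $E$ and can move $\proj(E)$ off itself, so neither the bound on $\ell_s-\ell_s\circ g$ nor the claim that $g[x]$ stays in $\{r\ge 3s\}$ holds for them. Both do hold for products of $N$ elements of $\supp\nu_0$, that is, for $g=A^N(\omega)$. Since only $P_A^N$ enters your pointwise estimates, this is all you actually use, and the argument goes through once the statement is restricted accordingly.

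Also, your integrated inequality silently drops the term $\|\phi_{B_k,N}-\phi_{A,N}\|_\infty\le C\delta_k$. This is harmless, since that term is $o(D_k)$.
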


We first estimate the error of a regularization of the coboundary. For $D>0$ small enough and $0<t<1$ define the global H\"older function
\begin{equation}\label{eq:gDt}
 h_{D,t}:= u^{}_{tD^{2/p}}+\ell^{}_{tD^{2/p}},
\end{equation}
where $u_s$ and $\ell_s$ were introduced in Section~\ref{sec:geometry}.  Assuming
$0<tD^{2/p}<1$, Lemmas~\ref{lem:reg-log} and \ref{lem:cutoff-u} give
\begin{equation}\label{eq:gDt-norm}
 \|h_{D,t}\|_{\alpha}
 \le C(tD^{2/p})^{-1}
 =Ct^{-1}D^{-2/p}.
\end{equation}
Now, for $N>0$ as in Proposition~\ref{prop:barrier-reference-new}, we define the regularized coboundary error as
\begin{equation}\label{eq:error-Dt}
 e_{D,t}:=\phi_{A,N}-N\lambda-
          (h_{D,t}-P_A^Nh_{D,t}).
\end{equation}

\begin{lemma}\label{lem:regularized-error-new}
There are constants $C>0$ and $D_0>0$ such that for
$0<D<D_0$ and $0<t<1$,
\begin{align}
 \|e_{D,t}\|_\infty\le C                         \quad \text{and} \quad
 |e_{D,t}([x])|\le CD^2
 \quad\text{whenever }r([x])\ge t.              \label{eq:error-outside-new2}
\end{align}
\end{lemma}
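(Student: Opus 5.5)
Set $s:=tD^{2/p}$, so $h_{D,t}=u_s+\ell_s$. The plan is to split $e_{D,t}$ against the exact coboundary exactly as in the proof of Proposition~\ref{prop:barrier-reference-new}. On $X$, Lemma~\ref{lem:quotient-poisson} gives $\phi_{A,N}-N\lambda=H-P_A^NH$ with $H=u+\ell$. Since $H-h_{D,t}=G_s-L_s$, where
\[
G_s:=(1-\chi_s)u,\qquad L_s:=\ell_s-\ell=\tfrac1p\log\bigl(1+s^pR\bigr),
\]
we obtain
\[
 e_{D,t}=(G_s-P_A^NG_s)-(L_s-P_A^NL_s)\qquad\text{on }X.
\]
Both bounds will be read off from this identity. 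Since $e_{D,t}$ is continuous on $\proj(\RR^d)$, any sup bound proved on $X$ extends to $\proj(E)$.

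For the uniform bound, $|G_s|\le\|u\|_\infty$, so the $G$-part is bounded by $2\|u\|_\infty$. The $L$-part needs care, because $L_s$ is unbounded as $s\to0$ near the equator; this is where I expect the main work. I would use one-step Lipschitz control of the projective action. Writing $A^N$ as a product from $\cK$, \eqref{eq:Lambda} gives $r([A^Nx])\ge\Lambda^{-2N}r([x])$, because $\|QA^Nx\|=\|\bar A^NQx\|\ge\Lambda^{-N}\|Qx\|$ and $\|A^Nx\|\le\Lambda^N\|x\|$. The reverse inequality $r([A^Nx])\le\Lambda^{2N}r([x])$ holds similarly, using $\|QA^Nx\|\le\Lambda^N\|Qx\|$ (here $E$ is $A$-invariant). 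Hence $R\circ A^N$ lies within a factor $\Lambda^{\pm2Np}$ of $R$. Since $z\mapsto\frac1p\log(1+z)$ changes by at most $\frac1p\log c$ when $z$ is multiplied by $c\ge1$, we get pointwise
\[
 |L_s([x])-L_s([A^Nx])|\le 2N\log\Lambda,
\]
and integrating gives $|L_s-P_A^NL_s|\le C$ uniformly in $s$. This proves $\|e_{D,t}\|_\infty\le C$.

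For the bound on $\{r\ge t\}$, take $D_0$ small so that $D^{2/p}\Lambda^{2N}<1/3$. If $r([x])\ge t$, then $r(A^N[x])\ge\Lambda^{-2N}t>3s$ almost surely. Thus $\chi_s=1$ both at $[x]$ and at all its images, so $G_s$ and $P_A^NG_s$ vanish at $[x]$. For the $L$-part, $s^pR\le s^p\Lambda^{2Np}t^{-p}=\Lambda^{2Np}D^2$ at $[x]$ and at all images $A^N[x]$. Using $0\le\log(1+z)\le z$, this gives $0\le L_s\le CD^2$ at these points, hence $|L_s-P_A^NL_s|([x])\le CD^2$. This yields the second estimate in \eqref{eq:error-outside-new2}.
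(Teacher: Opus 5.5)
Your proof is correct. The decomposition $e_{D,t}=(G_s-P_A^NG_s)-(L_s-P_A^NL_s)$ on $X$ matches the paper's. So does the uniform bound, which rests on the almost-sure radial distortion $\Lambda^{-2N}r\le r\circ A^N\le\Lambda^{2N}r$ (the paper writes $C_N=K^2$ in place of $\Lambda^{2N}$).

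The two arguments differ only in the estimate on $\{r\ge t\}$. The paper uses $t\ge 3\rho$ only to kill $G_\rho([x])$ itself. It then controls $P_A^NG_\rho$ and $P_A^NL_\rho$ through the majorant $|G_\rho|\le C\rho^pR$, Jensen's inequality and the drift bound of Lemma~\ref{lem:drift}, which give $C\rho^p(R([x])+1)\le C[(\rho/t)^p+\rho^p]\le CD^2$.

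You instead reuse the distortion bound to show that every image $A^N[x]$ stays in $\{r\ge\Lambda^{-2N}t\}$. After shrinking $D_0$ so that $3D^{2/p}\Lambda^{2N}<1$, $G_s$ vanishes identically along the orbit, and $0\le L_s\le\frac1p\Lambda^{2Np}D^2$ holds pointwise there.

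Your route is slightly more elementary: it shows this lemma needs only bounded distortion, not the Lyapunov function. Indeed, even in the paper's computation, $P_A^NR\le\Lambda^{2Np}R$ would replace the drift. The price is a threshold $D_0$ that depends on $N$ and $\Lambda$. This is harmless, since $N$ is fixed in Proposition~\ref{prop:barrier-reference-new} before the lemma is used.
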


\begin{proof}
Compact support gives $K\ge1$ such that
$\|A^N(\omega)\|,\|A^N(\omega)^{-1}\|\le K$ for $\mathbb P$-a.e.~$\omega$.
Since $E$ is invariant, $QA^N=\bar A^NQ$, and the orthogonal quotient
satisfies $\|\bar A^N\|,\|(\bar A^N)^{-1}\|\le K$. Hence
\[
 K^{-1}\|Qx\|\le\|\bar A^NQx\|\le K\|Qx\|,\qquad
 K^{-1}\|x\|\le\|A^Nx\|\le K\|x\|.
\]
Dividing and using $r([A^Nx])=\|\bar A^NQx\|/\|A^Nx\|$ gives
\begin{equation}\label{eq:radial-distortion-new}
 C_N^{-1}r([x])\le r([A^N(\omega)x])\le C_Nr([x]) \quad 
  \text{for $[x]\in X$ and $\mathbb P$-a.e.~$\omega$,}
\end{equation}
where $C_N=K^2$.
As in the proof of Proposition~\ref{prop:barrier-reference-new}, define on $X$ the functions
\[
 G_\rho:=(1-\chi_\rho)u,
 \qquad
 L_\rho:=\ell_{\rho}-\ell
 =\frac1p\log(1+\rho^pR) \quad \text{where \ \ $\rho:=tD^{2/p}$.}
\]
Since $H-h_{D,t}=G_\rho-L_\rho$, the exact coboundary identity
\eqref{eq:exact-coboundary} gives
\begin{equation}\label{eq:error-LG-new}
 e_{D,t}
 =(G_\rho-P_A^NG_\rho)-(L_\rho-P_A^NL_\rho)
 \qquad\text{on }X.
\end{equation}
For the global bound, $|G_\rho-P^N_AG_\rho|\le 2\|u\|_\infty$, while
\eqref{eq:radial-distortion-new} and $R=r^{-p}$ give
\[
 C_N^{-p}\le\frac{R([A^N(\omega)x])}{R([x])}\le C_N^p,
 \qquad
 C_N^{-p}\le\frac{1+\rho^pR([x])}{1+\rho^pR([A^N(\omega)x])}\le C_N^p.
\]
Taking logarithms yields
$|L_\rho([x])-L_\rho([A^N(\omega)x])|\le\log C_N$. Averaging gives 
$$
 |L_\rho([x])-P_A^NL_\rho([x])| \leq 
 \int |L_\rho([x])-L_\rho([A^N(\omega)x])|\,d\mathbb{P}\leq \log C_N.
$$
Thus, this bound and~\eqref{eq:error-LG-new}
imply the first inequality in~\eqref{eq:error-outside-new2} on $X$, and continuity of
\eqref{eq:error-Dt} extends the bound to $\proj(E)$.

Now let $r([x])\ge t$ and choose $D_0$ so that $3D_0^{2/p}\le1$.
Then, since $\rho = tD^{2/p}$ with $D<D_0$, it holds that $t\ge3\rho$, hence $G_\rho([x])=0$. Since $G_\rho$ is supported in
$\{r\le3\rho\}\cap X$, where $1\le(3\rho)^pR$,  we have that $|G_\rho| \leq \|u\|_\infty \leq \|u\|_\infty(3\rho)^pR$ on $\{r\leq 3\rho\} \cap X$. Thus,  $|G_\rho|\leq C \rho^p R$ on $X$. Then, Lemma~\ref{lem:drift} gives 
\begin{align*}
 |P_A^NG_\rho([x])|
 \le C\rho^pP_A^NR([x])
 \le C\rho^p(R([x])+1) \le C\left[\left(\frac\rho t\right)^p+\rho^p\right].
\end{align*}
Since $(\rho/t)^p=D^2$ and $\rho^p=t^pD^2\le D^2$, we obtain
\begin{equation}\label{eq:L-out-D2}
 |G_\rho([x])-P_A^NG_\rho([x])|\le CD^2 \quad \text{for $r([x])\ge t$}.
\end{equation}
For the logarithmic correction,
$0\le L_\rho([x])
 \le\frac1p\rho^pR([x])
 \le C\left(\frac\rho t\right)^p
 =CD^2$. Using Jensen's inequality and the drift once more,
\[
\begin{aligned}
 P_A^NL_\rho([x])
 &\le\frac1p\log(1+\rho^pP_A^NR([x]))\\
 &\le C\rho^p(R([x])+1)
 \le C\left[\left(\frac\rho t\right)^p+\rho^p\right]\le CD^2.
\end{aligned}
\]
Thus $|L_\rho([x])-P_A^NL_\rho([x])|\le CD^2$.
Combining this with~\eqref{eq:L-out-D2} in \eqref{eq:error-LG-new} proves~\eqref{eq:error-outside-new2}.
\end{proof}

\begin{proof}[Proof of Proposition~\ref{prop:defect-mass-new}] 
Since $m_k$ is maximizing,
$N(\lambda_1(B_k)-\lambda)=\int(\phi_{B_k,N}-N\lambda)\,dm_k$.
Using \eqref{eq:error-Dt} and $P_{B_k}^N$-stationarity, we obtain
\[
\begin{aligned}
 -ND_k
 &=\int e_{D_k,t}\,dm_k+\int(h_{D_k,t}-P_A^Nh_{D_k,t})\,dm_k
   +\int(\phi_{B_k,N}-\phi_{A,N})\,dm_k\\
 &=\int e_{D_k,t}\,dm_k+\int(P_{B_k}^N-P_A^N)h_{D_k,t}\,dm_k
   +\int(\phi_{B_k,N}-\phi_{A,N})\,dm_k.
\end{aligned}
\]
Lemma~\ref{lem:fixed-pert} and \eqref{eq:gDt-norm} give
\begin{equation}\label{eq:defect-before-split-new}
 ND_k\le-\int e_{D_k,t}\,dm_k+C\delta_k
          +C\delta_k^\alpha t^{-1}D_k^{-2/p}.
\end{equation}
For $0<\delta_k,D_k,t<1$, we have
$\delta_k\le\delta_k^\alpha t^{-1}D_k^{-2/p}$, so the $C\delta_k$ term is absorbed
into the last term. Lemma~\ref{lem:regularized-error-new} also gives
\[
 -\int_{\{r<t\}}e_{D_k,t}\,dm_k\le Cm_k(\{r<t\}),\qquad
 -\int_{\{r\ge t\}}e_{D_k,t}\,dm_k\le CD_k^2.
\]
Substituting into \eqref{eq:defect-before-split-new} and dividing by $N$ yields
\begin{equation}\label{eq:defect-mass-new}
 D_k\le Cm_k(\{r<t\})+CD_k^2+C\delta_k^\alpha t^{-1}D_k^{-2/p}.
\end{equation}
Since $\alpha-2a>0$, by~\eqref{eq:superpoly-calculus-new}, for $t\ge\delta_k^{2a}$ it holds
$\delta_k^\alpha t^{-1}D_k^{-2/p}
 \le\delta_k^{\alpha-2a}D_k^{-2/p}=o(D_k)$.
Also $D_k^2=o(D_k)$ because $D_k\to0$. Thus, 
uniformly for $\delta_k^{2a}\le t\le\delta_k^a$, the two errors in \eqref{eq:defect-mass-new}
are at most $D_k/2$ for large $k$, and
$m_k(\{r<t\})\ge D_k/(2C)$.  For the upper bound, use \eqref{eq:tube-upper-new} with
$\varepsilon=t/3$.  Hence, $m_k(\{r<t\})
 \le m_k(\{r\le t\})
 \le C(D_k+t^p+\delta_k^\alpha t^{-1})$.
Now $t^p\le\delta_k^{ap}=o(D_k)$ by~\eqref{eq:superpoly-calculus-new}, and
$\delta_k^\alpha t^{-1}
 \le\delta_k^{\alpha-2a}=o(D_k)$.
Thus  $
 m_k(\{r\le t\})\le CD_k$
 for all $t\in[\delta_k^{2a},\delta_k^a]$.  Combining this upper bound with the lower bound completes the proof of~\eqref{eq:radial-comparable-new}. 
\end{proof}

\subsection{Selection of nested tubes around the equator}
\label{sec:annulus-selection}
In Proposition~\ref{prop:defect-mass-new}, we computed the mass of the $P_{B_k}$-stationary measure $m_k$ on a neighborhood  $\{r \leq t\}$ of the projectivization of the equator for $t \in [\delta_k^{2a}, \delta_k^a]$. Because this neighborhood depends on $k$, we cannot directly apply~\cite[Thm.~7.1]{AEV}, see Theorem~\ref{prop:aev-induction} below. To overcome this, we will apply a linear rescaling $C_k$ that maps a selected nested sequence of cone-tube neighborhoods $ \mathcal{T}({t'_k}) \subset \mathcal{T}({t_k})$ of $\proj(E)$ in $\{r \leq t_k\}$ with $\delta_k^{2a} < t'_k < t_k < \delta_k^a$ to the fixed neighborhood $\{r \leq 1/\sqrt{2}\}$. Namely $C_k\mathcal{T}({t_k})=\{r\leq 1/\sqrt{2}\}$  and $C_k\mathcal{T}({t'_k})=\{r\leq (1+(t_k/t'_k)^2)^{-1/2}\}$ choosing carefully $t_k/t'_k \to \infty$ as $k\to \infty$. 
Conjugating the cocycles $B_k$ by this linear rescaling yields a new sequence of stationary probability measures $\tilde{m}_k$ whose normalized restriction to $\{r\leq 1/\sqrt{2}\}$ assigns uniform mass to $\{r\leq (1+(t_k/t'_k)^2)^{-1/2}\}$ for every $k\geq 1$. Here we construct these nested cone-tube neighborhoods.

Write $x=e+v$ according to
$\RR^d=E\oplus E^\perp$ and, on $\proj(\RR^d)\setminus\proj(E^\perp)$, define
\[
 s([e+v]):=\frac{\|v\|}{\|e\|},
 \qquad
 \mathcal T(t):=\{[e+v]:s([e+v])\le t\}.
\]
By orthogonality,
\begin{equation}\label{eq:r-slope-new}
 r([e+v])= \frac{\|v\|}{\|e+v\|} = \frac{\|v\|}{\sqrt{\|e\|^2+\|v\|^2}}=\frac{s([e+v])}{\sqrt{1+s([e+v])^2}}.
\end{equation}
Since the function $r=f(s)=s(1+s^2)^{-1/2}$ is  strictly increasing for $s\geq 0$, we have  $s\leq t \iff r=f(s)\leq f(t)={t}(1+t^2)^{-1/2}$. Thus, $t 2^{-1/2} \leq t ( {1+t^2})^{-1/2} \leq t$ for $0<t\leq 1$ implies 
\begin{equation}\label{eq:radial-slope-compare-new}
 \{r\le t/\sqrt2\}\subset\mathcal T(t)\subset\{r\le t\}.
\end{equation}
Therefore, \eqref{eq:radial-comparable-new} gives
\begin{equation}\label{eq:slope-comparable-new}
 cD_k\le m_k(\mathcal T(t))\le CD_k \quad \text{for \  $2\delta_k^{2a}\le t\le\frac12\delta_k^a$}.
\end{equation}

\begin{lemma} \label{lem:three-concrete-inputs}
After discarding finitely many terms, there exist sequences $(t_k)_{k\geq 1}$ and $(t'_k)_{k\geq 1}$ with $2\delta_k^{2a}\le t'_k<t_k \le\frac12\delta_k^a$ such that
\begin{equation}
    \label{eq:three-concrete-inputs}
 \frac{\delta_k}{t_k}\longrightarrow0,\quad  \frac{t_k}{t'_k} \to \infty \quad  \text{and} \quad
 \frac{m_k(\mathcal T(t'_k))}{
 m_k(\mathcal T(t_k))} \to 1 \quad \text{as $k\to \infty$.}
\end{equation}
\end{lemma}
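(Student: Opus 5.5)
The plan is a pigeonhole argument on a geometric grid of scales inside the window $[2\delta_k^{2a},\frac12\delta_k^a]$. The point is that $t\mapsto m_k(\mathcal T(t))$ is nondecreasing and, by \eqref{eq:slope-comparable-new}, confined to $[cD_k,CD_k]$ throughout the window. Hence its total multiplicative increase over the window is at most $C/c$. If we divide the window into many scale blocks, some block must therefore carry a multiplicative increase close to $1$.

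First, discard finitely many $k$ so that $\delta_k<1$ and $2\delta_k^{2a}<\frac12\delta_k^{a}$. Set $T_k:=\frac12\delta_k^a$ and $L_k:=\log\bigl(T_k/(2\delta_k^{2a})\bigr)=a\log(1/\delta_k)-\log 4\to\infty$. Choose integers $n_k\to\infty$ and put $\sigma_k:=L_k/n_k$. We want $\sigma_k\to\infty$ as well; for instance $n_k:=\lfloor L_k^{1/2}\rfloor$ works.

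Next, define the grid $\tau_j:=T_k e^{-j\sigma_k}$ for $j=0,\dots,n_k$. Then $\tau_0=T_k$, $\tau_{n_k}=2\delta_k^{2a}$, and all $\tau_j$ lie in the admissible window. Put $M_j:=m_k(\mathcal T(\tau_j))$, which is nonincreasing in $j$ and lies in $[cD_k,CD_k]$ with $M_{n_k}>0$. The telescoping product gives
\[
 \prod_{j=0}^{n_k-1}\frac{M_j}{M_{j+1}}=\frac{M_0}{M_{n_k}}\le\frac Cc .
\]
Each factor is at least $1$, so some index $j_k$ satisfies $1\le M_{j_k}/M_{j_k+1}\le (C/c)^{1/n_k}\to1$. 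Set $t_k:=\tau_{j_k}$ and $t'_k:=\tau_{j_k+1}$. Then $t_k/t'_k=e^{\sigma_k}\to\infty$, and $m_k(\mathcal T(t'_k))/m_k(\mathcal T(t_k))\to1$.

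It remains to check $\delta_k/t_k\to0$. This holds because $t_k\ge t'_k\ge2\delta_k^{2a}$ and $2a<\alpha/2<1$, so
\[
 \frac{\delta_k}{t_k}\le\tfrac12\,\delta_k^{1-2a}\to0 .
\]
Finally, $t'_k<t_k$ holds because $\sigma_k>0$.

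The only delicate point is balancing the two competing requirements. The scale gap $\sigma_k$ must diverge, while the number of blocks $n_k$ must also diverge so that the mass ratio tends to $1$. Both are possible because the log-length $L_k$ of the window diverges, which comes from $\delta_k\to0$; splitting $L_k$ as $\sqrt{L_k}\cdot\sqrt{L_k}$ handles this. Positivity of the masses, needed for the ratios, is supplied by the lower bound $cD_k$ together with $D_k>0$.
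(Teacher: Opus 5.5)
Your proof is correct and is essentially the paper's argument: a pigeonhole over a geometric grid of scales in $[2\delta_k^{2a},\frac12\delta_k^a]$, with both the number of steps and the step ratio diverging, combined with the two-sided bound $cD_k\le m_k(\mathcal T(t))\le CD_k$ and the estimate $\delta_k/t_k\le\frac12\delta_k^{1-2a}$. The differences are cosmetic: the paper takes step ratio $\log(1/\delta_k)$ and applies an additive pigeonhole to the disjoint annuli (total mass at most $CD_k$, then divides by $m_k(\mathcal T(t_k))\ge cD_k$), whereas you take step ratio $e^{\sigma_k}$ with $\sigma_k\ge\sqrt{L_k}$ and use the telescoping product of mass ratios.
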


\begin{proof}
After discarding finitely many terms, set
$K_k:=\log(1/\delta_k)>1$. Then $K_k\to\infty$. Starting from the upper end of the interval in
\eqref{eq:slope-comparable-new}, consider the radii
\[
 t_{k,j}:=\frac12\delta_k^aK_k^{-j},
 \qquad 0\le j\le L_k,
\]
where $L_k$ is the largest integer such that
$t_{k,L_k}\ge2\delta_k^{2a}$. By maximality,
$K_k^{L_k+1}>\frac14\delta_k^{-a}$,
and therefore
\[
 L_k>
 \frac{a\log(1/\delta_k)-\log4}
      {\log\log(1/\delta_k)}-1
 \longrightarrow\infty.
\]
On the other hand,  the annuli $\mathcal T(t_{k,j})\setminus\mathcal T(t_{k,j+1})$, 
$0\le j<L_k$, are pairwise disjoint and contained in $\mathcal T(t_{k,0})$. Hence,
by \eqref{eq:slope-comparable-new},
\[
 \sum_{j=0}^{L_k-1}
 m_k\bigl(
 \mathcal T(t_{k,j})\setminus\mathcal T(t_{k,j+1})
 \bigr)
 \le CD_k.
\]
Thus, $m_k(
 \mathcal T(t_{k,j_k})\setminus
 \mathcal T(t_{k,j_k+1}))
 \le CD_k / L_k$ for some $j_k<L_k$. 
Set $t_k:=t_{k,j_k}$ and $t'_k:=t_{k,j_k+1}$. Clearly $t'_k<t_k$ and since both radii belong to the interval of
\eqref{eq:slope-comparable-new}, $m_k(\mathcal T(t_k))\ge cD_k$. Consequently,
\[
\begin{aligned}
 0
 \le
 1-\frac{m_k(\mathcal T(t'_k))}
          {m_k(\mathcal T(t_k))} =
 \frac{
 m_k(\mathcal T(t_k)\setminus\mathcal T(t'_k))
 }{
 m_k(\mathcal T(t_k))
 }
 \le \frac{C}{cL_k}
 \longrightarrow0.
\end{aligned}
\]
This proves the last limit in \eqref{eq:three-concrete-inputs}. Moreover,
$t_k/{t'_k}=K_k\longrightarrow\infty$.
Finally, since $t_k\ge2\delta_k^{2a}$,
$\delta_k/t_k
 \le\frac12\delta_k^{1-2a}
 \longrightarrow0$,
because $a<\alpha/4\le1/8$. This proves the lemma.
\end{proof}

\subsection{The renormalization}

For $t>0$ define the linear map
\begin{equation}\label{eq:Ct-new}
 C_t(e+v):=e+t^{-1}v,
 \qquad e\in E,\ v\in E^\perp.
\end{equation}
Since $C_t[e+v]=[e+t^{-1}v]$ with $e\in E$ and $t^{-1}v\in E^\perp$, $s(C_t[e+v])=\frac{t^{-1}\|v\|}{\|e\|} = \frac{1}{t} s([e+v])$.  
Thus, \eqref{eq:r-slope-new} gives the identity
\begin{equation*}%\label{eq:r-after-Ct-new}
 r(C_t[e+v])
 =\frac{s([e+v])/t}{\sqrt{1+(s([e+v])/t)^2}}.
\end{equation*}
Hence, for any $0<t'\leq t$ it holds that 
\begin{align}
 C_t\mathcal T(t')=\bigg\{ r \leq \frac1{\sqrt{1+(t/t')^2}} \bigg\}.     \label{eq:inner-fixed-new}
\end{align}
Define the conjugacy  
\begin{equation}\label{eq:Btilde-new}
 \tilde B_k:=C_{t_k}B_kC_{t_k}^{-1}, \qquad \tilde\nu_k := (\tilde B_k)_* \mathbb{P}.
\end{equation}
We also define the block-diagonal reference cocycle
\begin{equation}\label{eq:A0-new}
 A_0(\omega):=
 \begin{pmatrix}A(\omega)|_E&0\\0&\bar A(\omega)\end{pmatrix}
\end{equation}
where we recall that $\bar{A}$ denoted the quotient cocycle on $V$ which is isometrically identified with $E^\perp$. We have that $\lambda_1(A_0)=\lambda$ since the restriction to $E$ has top exponent
$\lambda_1(A|_E)<\lambda$, while the quotient block has top exponent $\lambda$, cf.~\cite[Claim 5.5.2]{BMbook}.  Moreover, the equator of $\nu_{A_0}=(A_0)_*\mathbb P$ is exactly $E$.
Indeed, $E$ is invariant and has top exponent smaller than $\lambda$.
If an $A_0$-invariant subspace $W$ is not contained in $E$, then $QW\ne0$
is $\bar A$-invariant. Quasi-irreducibility of $\bar A$ gives
$\lambda_1(\bar A|_{QW})=\lambda$. Since this restriction is a quotient
of $A_0|_W$, its top exponent is at most $\lambda_1(A_0|_W)\le\lambda$.
Thus $W$ cannot be contained in the equator.

\begin{lemma}\label{lem:renorm}
The law $\tilde{\nu}_k$ is generic and $\tilde{\nu}_k \to \nu_{A_0}$ as $k\to \infty$ in the weak--Hausdorff topology. 
\end{lemma}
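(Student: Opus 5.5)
Genericity comes from conjugation invariance; convergence comes from a block computation of $C_{t_k}B_kC_{t_k}^{-1}$.

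\emph{Genericity.} Conjugation $g\mapsto C_{t_k}gC_{t_k}^{-1}$ is a polynomial automorphism of $G$ with polynomial inverse. It therefore maps proper algebraic subvarieties to proper algebraic subvarieties. Since $\tilde\nu_k$ is the push-forward of the generic law $\nu_k$ under this map, every proper subvariety $Z$ satisfies $\tilde\nu_k(Z)=\nu_k(C_{t_k}^{-1}ZC_{t_k})=0$.

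\emph{Convergence.} It is enough to prove two things on the fixed base $(\Omega,\mathbb P)$. First, $\int\delta(\tilde B_k,A_0)\,d\mathbb P\to0$, which gives $\delta_W(\tilde\nu_k,\nu_{A_0})\to0$ through the coupling $(\tilde B_k,A_0)_*\mathbb P$. Second, the supports converge in the Hausdorff sense.

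Write any $g$ in block form with respect to $E\oplus E^\perp$ as $g=\begin{pmatrix}g_{11}&g_{12}\\ g_{21}&g_{22}\end{pmatrix}$. Then
\[
C_tgC_t^{-1}=\begin{pmatrix}g_{11}&t\,g_{12}\\ t^{-1}g_{21}&g_{22}\end{pmatrix}.
\]
For $g=A(\omega)$ we have $g_{21}=0$ because $E$ is invariant, $g_{11}=A|_E$ and $g_{22}=\bar A$. Hence
\[
\|C_tA C_t^{-1}-A_0\|\le t\|A_{12}\|\le t\Lambda.
\]
For $B_k$ we get
\[
\|\tilde B_k-C_{t_k}AC_{t_k}^{-1}\|\le C(t_k+1+t_k^{-1})\|B_k-A\|\le Ct_k^{-1}\|B_k-A\|,
\]
since $t_k<1$. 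Integrating gives
\[
\int\|\tilde B_k-A_0\|\,d\mathbb P\le C\bigl(t_k+\delta_k/t_k\bigr)\to0
\]
by Lemma~\ref{lem:three-concrete-inputs}, using $t_k\le\delta_k^a\to0$ and $\delta_k/t_k\to0$.

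The inverse term is handled the same way. Conjugation commutes with inversion, $A^{-1}$ also preserves $E$, and $A_0^{-1}$ is the block-diagonal part of $A^{-1}$. Thus $\|C_tA^{-1}C_t^{-1}-A_0^{-1}\|\le t\Lambda$ and $\|\tilde B_k^{-1}-C_{t_k}A^{-1}C_{t_k}^{-1}\|\le Ct_k^{-1}\|B_k^{-1}-A^{-1}\|$. So $\int\delta(\tilde B_k,A_0)\,d\mathbb P\le C(t_k+\delta_k/t_k)\to0$.

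\emph{Hausdorff convergence of supports.} Let $\Phi_t(g)=C_tgC_t^{-1}$. Every $h\in\supp\nu_k$ lies within $\delta_k$ of some $g\in\supp\nu_0$. Since $g$ is block upper triangular, the estimates above give
\[
\delta(\Phi_{t_k}h,g_0)\le C(t_k+\delta_k/t_k),
\]
where $g_0$ is the block-diagonal part of $g$. Conversely, each such $g_0$ is approximated by $\Phi_{t_k}h$ for some $h\in\supp\nu_k$ within $\delta_k$ of $g$. We also have $\supp\nu_{A_0}=\{g_0:g\in\supp\nu_0\}$ and $\supp\tilde\nu_k=\Phi_{t_k}(\supp\nu_k)$. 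Together these give $\delta_H(\supp\tilde\nu_k,\supp\nu_{A_0})\le C(t_k+\delta_k/t_k)\to0$. All of this happens inside a common compact set, so we conclude $\delta_{\cT}(\tilde\nu_k,\nu_{A_0})\to0$, which is weak--Hausdorff convergence.

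\emph{Main obstacle.} The only delicate point is the lower-left block. There the factor $t_k^{-1}$ amplifies the perturbation, and it is controlled only because $\delta_k/t_k\to0$; this is exactly why Lemma~\ref{lem:three-concrete-inputs} imposes that condition.
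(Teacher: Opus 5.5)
Your proof is correct and follows essentially the same route as the paper. Genericity comes from the algebraic automorphism $g\mapsto C_{t_k}gC_{t_k}^{-1}$. Convergence comes from the block computation, which gives $\Delta(\tilde B_k,A_0)\le C(t_k+\delta_k/t_k)\to0$ for the Wasserstein part and the Hausdorff part alike; your explicit identification of $\supp\nu_{A_0}$ with the block-diagonal parts of $\supp\nu_0$ spells out a step the paper states only briefly.
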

\begin{proof}
Since conjugation by $C_{t_k}$ is an algebraic automorphism of $G$,
the inverse image of every proper algebraic subvariety of $G$ is
again a proper algebraic subvariety. Thus, the genericity of
$\nu_k$ implies that of $\tilde\nu_k$.

Relative to $\RR^d=E\oplus E^\perp$, write
\[
 C_t=
 \begin{pmatrix}I_E&0\\0&t^{-1}I_{E^\perp}\end{pmatrix},
 \qquad
 A(\omega)=
 \begin{pmatrix}a_{11}&a_{12}\\0&a_{22}\end{pmatrix},
 \qquad
 B_k(\omega)=
 \begin{pmatrix}b_{11}&b_{12}\\b_{21}&b_{22}\end{pmatrix}.
\]
Here $a_{11}=A(\omega)|_E$ and $a_{22}=\bar A(\omega)$.
A direct computation gives
\[
 \tilde B_k(\omega)
 =
 \begin{pmatrix}
 b_{11}&t_kb_{12}\\
 t_k^{-1}b_{21}&b_{22}
 \end{pmatrix}.
\]
Since the supports of $A$ and $B_k$ lie in a common compact
subset of $G$, their matrix blocks are uniformly bounded.
Moreover, $a_{21}=0$, so
\[
 \|\tilde B_k(\omega)-A_0(\omega)\|
 \le C\left(
 t_k+\frac{\|B_k(\omega)-A(\omega)\|}{t_k}
 \right).
\]
The same computation applied to $B_k^{-1}$ and $A^{-1}$
gives
\[
 \|\tilde B_k(\omega)^{-1}-A_0(\omega)^{-1}\|
 \le C\left(
 t_k+\frac{\|B_k(\omega)^{-1}-A(\omega)^{-1}\|}{t_k}
 \right),
\]
because $A^{-1}$ also preserves $E$. Integrating both
inequalities yields
$d(\tilde B_k,A_0)
 \le C (t_k+\frac{d(A,B_k)}{t_k})$.

The same block estimates hold for any
$a\in\supp\nu_A$ and $b\in\supp\nu_k$. Applying them in both
directions to the supports gives
$
 \delta_H(\supp\tilde\nu_k,\supp\nu_{A_0})
 \le C (
 t_k+\frac1{t_k}\delta_H(\supp\nu_k,\supp\nu_A))$.
Consequently,
\[
 \Delta(\tilde B_k,A_0)
 \le C\left(t_k+\frac{\delta_k}{t_k}\right)
 \longrightarrow0 \quad \text{as $k\to\infty$},
\]
where the convergence follows from
\eqref{eq:three-concrete-inputs} and $t_k\to0$.
By~\eqref{eq:law-nonexpansion}, this implies
$\delta_{\mathcal T}(\tilde\nu_k,\nu_{A_0})\to0$ and therefore $\tilde{\nu}_k$ converges to $\nu_{A_0}$ in the weak--Hausdorff
topology. 
\end{proof}

\subsection{The localized induction}
We recall some definitions from \cite[Sec.~6.5]{AEV}. For a probability
$\mu$ on $G$ acting on an algebraic variety $Z$, write the transition probability
$\mu_x=(g\mapsto gx)_*\mu$. A transition kernel $x\mapsto\sigma_x$ is a family of probability
measures such that $x\mapsto\sigma_x(F)$ is measurable for every Borel set $F$.
A probability on $Z$ is generic if it gives
zero mass to every proper algebraic subvariety. For a compact neighborhood
$U\subset Z$, fix $a_U>0$ and set
$U_s=\{x\in U:d(x,Z\setminus U)\ge a_Us\}$ for $0\le s\le2$.
The core, border and sharp core are, respectively,
\begin{align*}
 \Chi_\mu U
 &=\{x\in U_2:gx\in U_2\text{ for every }g\in\supp\mu\},\\
 \partial_\mu U&=U\setminus\Chi_\mu U,\\
 \Chi_\mu^\# U
 &=\{x\in\Chi_\mu U:g^{-1}x\in\Chi_\mu U
                       \text{ for every }g\in\supp\mu\}.
\end{align*}
A transition kernel $\sigma_x$ on $U$ defines a \emph{Markov operator}
$T:B(U)\to B(U)$ by
\[
 T\varphi(x)=\int_U \varphi(y)\,d\sigma_x(y).
\]  
 Here $B(U)$ denotes the space of real-valued bounded Borel functions on $U$. This operator $T$ is said to be adapted to $(\mu,U)$ if
\begin{enumerate}[label=\rm(\alph*)]
\item $\sigma_x=\mu_x$ in a neighborhood of $\Chi_\mu U$;
\item $\sigma_x(\Chi_\mu^\#U)=0$ for every $x\in\partial_\mu U$;
\item $\sigma_x$ is generic for every $x\in U$.
\end{enumerate}
Here and below, $T$ is said to be \emph{Feller continuous} if it preserves
continuous functions. 

For $1\le r\le d$, let $\Gr(r,\mathbb{R}^d)$ be the Grassmannian of $r$-dimensional linear subspaces of $\mathbb{R}^d$ endowed with the metric
\[
 d_r(L,L')=\sup_{u\in L\setminus\{0\}}
          \inf_{v\in L'\setminus\{0\}}|\sin\angle(u,v)|.
\]
Put $E_r=\{L\in\Gr(r,\RR^d):L\subset E\}$ and denote its closed
$\varepsilon$-neighborhood by $E_r(\varepsilon)$.
% For $U=E_r(\varepsilon)$, take $a_U=\varepsilon/100$, as in
% \cite[Example~6.24 and Sec.~7]{AEV}.
Notice that $E_1=\proj(E)$ and $E_1(\varepsilon)=\{r\leq \varepsilon\}$.  The following is the main inductive statement of Avila--Eskin--Viana~\cite{AEV}. Here $\mu^{(n)}$ denotes the law of a product of $n$ independent matrices with law $\mu$.

\begin{theorem}[{\cite[Thm.~7.1]{AEV}}]\label{prop:aev-induction}
Let $\mu_k$ be generic compactly supported laws on $G$ with Zariski-dense
supports, converging in the weak--Hausdorff topology to $\mu_\infty$,
and let $E$ be the nonzero equator of $\mu_\infty$.
Let $1\le r\le d$ and suppose that there exist
$\varepsilon_r>0$ and $n_r\in\mathbb N$ such that
\begin{enumerate}[label=\rm(\roman*)]
\item For each $k\in\mathbb N$ there is a Feller continuous Markov operator
\[
 T_{k,r}: B(E_r(\varepsilon_r))\longrightarrow
           B(E_r(\varepsilon_r)),\qquad
 T_{k,r}\varphi(x)=\int_{E_r(\varepsilon_r)}
                          \varphi(y)\,d\sigma_{k,r,x}(y),
\]
adapted to $(\mu_k^{(n_r)},E_r(\varepsilon_r))$.
\item For each $k\in\mathbb N$ there is a $T_{k,r}$-invariant probability
$\eta_{k,r}$ on $E_r(\varepsilon_r)$ such that \\
$\eta_{\infty,r}=\lim_k\eta_{k,r}$ exists and satisfies
$\eta_{\infty,r}(E_r)>0$.
\end{enumerate}
If $r<\dim E$, there exist $\varepsilon_{r+1}>0$ and $n_{r+1}\in\mathbb N$
such that \textup{(i)} and \textup{(ii)} hold with $r$ replaced by $r+1$.
If $r=\dim E$, then \textup{(i)} and \textup{(ii)} cannot hold.
\end{theorem}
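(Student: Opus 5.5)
The statement is quoted from \cite[Thm.~7.1]{AEV}. The plan below reconstructs the structure of the Avila--Eskin--Viana argument only in outline; I have not checked the delicate steps against their proof.

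The first step is to pass to the limit in the invariance relation. Write $U=E_r(\varepsilon_r)$ and $\mu_k'=\mu_k^{(n_r)}$. By (a), $\sigma_{k,r,x}=(\mu_k')_x$ near the core $\Chi_{\mu_k'}U$. Weak--Hausdorff convergence $\mu_k\to\mu_\infty$ makes the cores converge to $\Chi_{\mu_\infty'}U$. Since $E$ is $\mu_\infty$-invariant, $E_r$ lies in the core, and even in the sharp core, of $\mu_\infty'$, provided $\varepsilon_r$ is small relative to $a_U$. I would use (b) to show that the border contributes nothing to the limit: $T_{k,r}$-invariance gives $\eta_{k,r}(\Chi^\#)=\int_U \sigma_{k,r,x}(\Chi^\#)\,d\eta_{k,r}(x)$, and (b) kills the border part of this integral. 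Combined with Feller continuity, this should show that $\eta_{\infty,r}$ restricted to a neighbourhood of $E_r$ is $\mu_\infty'$-stationary. Its normalized restriction $\bar\eta$ to $E_r\cong\Gr(r,E)$ is then a $\mu_\infty'|_E$-stationary probability.

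The second step uses genericity. By (c), each $\eta_{k,r}$ is generic, hence non-atomic on algebraic subvarieties, because a generic kernel applied to an invariant measure produces a generic measure. The limit $\eta_{\infty,r}$, however, charges the proper subvariety $E_r$. I would quantify this failure with the Furstenberg-type formula for the sum of the top $r$ exponents. Set
\[
 \Phi(L,g)=\log|\det(g|_L)|.
\]
Invariance of $\eta_{k,r}$ under the adapted kernel, integrated against $\Phi$, computes $\sum_{i\le r}\lambda_i(\mu_k)$ up to border error terms. In the limit, the mass of $\eta_{\infty,r}$ sitting on $E_r$ contributes $\int\Phi\,d\bar\eta\cdot d\mu'_\infty$. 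That quantity is at most $n_r\sum_{i\le r}\lambda_i(\mu_\infty|_E)$, which is strictly smaller than the corresponding sum for the top exponents, because $E$ is the equator and $\lambda_1(\mu_\infty|_E)<\lambda_1(\mu_\infty)$.

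The third step closes the dichotomy. If $r=\dim E$, then $E_r=\{E\}$ is a single point, and the previous step yields a $\mu_\infty$-invariant Dirac mass with a strictly deficient exponent. Because the $\eta_{k,r}$ are generic (non-atomic), this contradicts the upper semicontinuity bound together with the continuity of $\lambda_1+\dots+\lambda_r$ along $\mu_k$ (Lemma~\ref{lem:usc-new}). So (i) and (ii) cannot both hold.

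If $r<\dim E$, I would build the operator at level $r+1$ on the flag space $\{(L,L'):L\subset L'\}$ and push it forward to $\Gr(r+1)$. Concretely, take $L'=L\oplus\langle w\rangle$ with $w$ drawn from a generic smoothing of the level-$r$ kernel. Then use Hausdorff closeness of the supports to choose $\varepsilon_{r+1}$ and $n_{r+1}$ so that conditions (a)--(c) hold. Properties (a) and (c) are inherited from the level-$r$ operator together with genericity of $\mu_k$ and Zariski density of its support. For (b), one modifies the kernel on the border so that it is sent away from the sharp core. The invariant measures $\eta_{k,r+1}$ would come from the Markov--Kakutani theorem using Feller continuity, and (ii) at level $r+1$ would follow from an induced mass-retention estimate near $E$.

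I expect the main obstacle to be the construction at level $r+1$, specifically proving $\eta_{\infty,r+1}(E_{r+1})>0$. This requires showing that mass concentrating near $E_r$ cannot escape to subspaces transverse to $E$. The natural tool is the exponent gap from the second step: it should make $E_{r+1}$ attracting in an averaged sense for the induced dynamics, giving a Lyapunov function argument similar to Lemma~\ref{lem:drift}, but uniformly in $k$.
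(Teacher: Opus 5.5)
The paper gives no proof of this statement: it quotes \cite[Thm.~7.1]{AEV} and applies it. Your outline therefore has to stand on its own, and it does not, because the mechanism that is supposed to produce the contradiction is invalid. Grant your Step~1 and the bound $\iint\Phi\,d\bar\eta\,d\mu_\infty^{(n_r)}\le n_r\sum_{i\le r}\lambda_i(\mu_\infty|_E)<n_r\sum_{i\le r}\lambda_i(\mu_\infty)$. The problem is that nothing ties this quantity to $\sum_{i\le r}\lambda_i(\mu_k)$.

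The identity $\sum_{i\le r}\lambda_i(\mu)=\iint\Phi\,d\mu\,d\eta$ requires $\eta$ to be a maximizing $\mu$-stationary probability on all of $\Gr(r,\RR^d)$. Your $\eta_{k,r}$ is only invariant for the adapted kernel on $U=E_r(\varepsilon_r)$. That kernel agrees with $\mu_k^{(n_r)}$ only near the core, and nothing in (i)--(ii) makes $\eta_{k,r}(\partial_{\mu_k^{(n_r)}}U)$ small. So there is no identity ``up to border errors'' at all.

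The paper's own application shows this concretely at $r=1$. There $\eta_{k,1}$ is the normalized restriction of the maximizing measure $\tilde m_k$ to $U$, but $\tilde m_k(U)=m_k(\mathcal T(t_k))\asymp D_k\to0$. Hence the Furstenberg identity for $\lambda_1(\tilde\nu_k)$ is carried by the mass outside $U$. It puts no constraint on $\int\phi_{\tilde B_k}\,d\eta_{k,1}$, whose limit your Step~2 bounds by $\lambda_1(A|_E)<\lambda$.

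Step~3 then appeals to continuity of $\lambda_1+\dots+\lambda_r$ along $\mu_k$. This is not a hypothesis of the theorem, and it is the conclusion that \cite{AEV} obtain by means of this theorem, so the argument is circular. Upper semicontinuity (Lemma~\ref{lem:usc-new}, applied to exterior powers) only gives $\limsup_k\le$, which is compatible with a deficit. Genericity of the $\eta_{k,r}$ does not help either, since weak limits of generic measures can have atoms. Finally, your exponent comparison never uses $r=\dim E$. If it were valid, it would exclude (i)--(ii) already at $r=1$, and the induction would be superfluous.

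The second gap is that the passage from $r$ to $r+1$, which is the real content of the theorem, is not carried out. Moreover, the mechanism you suggest for $\eta_{\infty,r+1}(E_{r+1})>0$ points the wrong way. Because $E$ is the equator, the quotient is quasi-irreducible with $\lambda_1(\bar\mu_\infty)>\lambda_1(\mu_\infty|_E)$. Consequently subspaces near $E_r$ but not in it are pushed away on average: their transverse component grows faster than their $E$-component. For $r=1$ this is exactly Lemma~\ref{lem:drift}, where $R=r^{-p}$ decreases under $P_A^{N_0}$ up to an additive constant. A Lyapunov-function argument ``similar to Lemma~\ref{lem:drift}, uniformly in $k$'' would therefore expel mass from neighbourhoods of $E_{r+1}$, not retain it.

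Positivity at level $r+1$ has to be manufactured from the mass that hypothesis (ii) already places on $E_r$. At the same time, the new kernels must be Feller continuous and satisfy (a)--(c); even to get the level-one operators the paper needs \cite[Props.~6.15 and~6.25]{AEV}. Likewise, the impossibility at $r=\dim E$ must be derived from (a)--(c), genericity and the equator property alone, since the statement carries no information about the exponents of $\mu_k$.

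Two smaller slips. First, $E_r\subset U_2$ requires $a_U$ small relative to $\varepsilon_r$, not the reverse. Second, only the restriction of $\eta_{\infty,r}$ to the invariant set $E_r$, not to a neighbourhood of it, can be $\mu_\infty^{(n_r)}$-stationary.
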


\subsection{The lower bound}
Let $\tilde\nu_k$ and $\nu_{A_0}$ be, respectively, the laws of $\tilde B_k$ in~\eqref{eq:Btilde-new} and $A_0$ in~\eqref{eq:A0-new}. Take $\varepsilon_1=2^{-1/2}$ and $n_1=1$ and set
\begin{align*}
\mu_k=\tilde \nu_k, \quad  \mu_\infty=\nu_{A_0}, 
     \quad
 U=E_1(\varepsilon_1), \quad
 \eta_{k,1}=\frac{\tilde m_k|_U}{\tilde m_k(U)} \ \ \text{with $\tilde m_k=(C_{t_k})_*m_k$}.
\end{align*}
By Lemma~\ref{lem:renorm}, the laws $\mu_k$ are generic and converge in the weak-Hausdorff topology to the law $\mu_\infty$, whose equator is
$E$. The measure $\tilde m_k$ is $\mu_k$-stationary. Moreover, by~\eqref{eq:inner-fixed-new},
$C_{t_k}\mathcal{T}(t_k)=\{r\leq 2^{-1/2}\}=E_1(\varepsilon_1)$. Hence, by  \eqref{eq:slope-comparable-new},
\[
 \tilde m_k(U)=m_k(\mathcal T(t_k))\ge cD_k>0. 
\]
so $\eta_{k,1}$ is well defined.  The localization criterion \cite[Prop.~6.25]{AEV} applies to a
Markov operator $T$ on a larger space $Z$, an invariant probability $m$,
and a subset $U$ with $m(U)>0$. Its hypotheses on the transitions are
\[
 \sigma_x=\mu_x\ (x\in U),\qquad
 \sigma_x(\Chi_\mu^\#U)=0\ (x\notin U),\qquad
 \sigma_x\text{ generic }\ (x\in Z).
\]
Together with \cite[Prop.~6.15]{AEV}, it gives a continuous operator
on $U$, adapted to $(\mu,U)$, preserving the normalized restriction  $m|_U/m(U)$.
In our application $Z=\proj(\RR^d)$, $T=P_{\tilde{B}_k}$,
$m=\tilde m_k$ and $\mu=\mu_k=\tilde{\nu}_k$.
The first condition holds identically. For the second, if $x\notin U$
and $gx\in\Chi_{\mu_k}^\#U$ for some $g\in\supp\mu_k$, the definition of the
sharp core would imply $x=g^{-1}(gx)\in\Chi_{\mu_k} U\subset U$,
a contradiction. Thus $\mu_{k,x}(\Chi_{\mu_k}^\#U)=0$.
For the third, let $Z_0$ be a proper algebraic subvariety of $\proj(\RR^d)$.
The orbit map $g\mapsto gx$ is algebraic and surjective, so its inverse
image of $Z_0$ is a proper algebraic subvariety of $G$.
Genericity of $\mu_k=\tilde\nu_k$ provided by Lemma~\ref{lem:renorm} therefore gives
$\mu_{k,x}(Z_0)=0$.
All three hypotheses hold. We obtain Feller continuous Markov operators
$ T_{k,1}: B(U)\longrightarrow B(U)$
adapted to $(\mu_k,U)$ and preserving~$\eta_{k,1}$.

By compactness of $U$, pass to a subsequence such that
$\eta_{k,1}\to\eta_{\infty,1}$ in the weak$^*$ topology. For fixed $\varepsilon>0$, the sets
$\{r\le(1+(t_k/t'_k)^2)^{-1/2}\}$ are eventually contained in the closed set
$E_1(\varepsilon)$, since $t_k/t'_k\to\infty$. Thus Portmanteau's theorem gives
\[
\begin{aligned}
 \eta_{\infty,1}(E_1(\varepsilon))
 &\ge\limsup_k\eta_{k,1}(E_1(\varepsilon))\\
 &\ge\lim_k\eta_{k,1}\bigl(\{r\le(1+(t_k/t'_k)^2)^{-1/2}\}\bigr)=\lim_k\frac{m_k(\mathcal T(t'_k))}{m_k(\mathcal T(t_k))}=1,
\end{aligned}
\]
where the last line follows from \eqref{eq:inner-fixed-new} and
Lemma~\ref{lem:three-concrete-inputs}. Continuity from above as
$\varepsilon\downarrow0$ implies $\eta_{\infty,1}(E_1)=1$.
Theorem~\ref{prop:aev-induction} therefore applies.
Starting at $r=1$ and applying its conclusion successively yields the
forbidden case $r=\dim E$. This is a contradiction.
Consequently there are $C_->0$, $\theta_->0$ and $r_->0$ such that
\begin{equation} \label{eq:lower-holder-new}
     \lambda_1(B)\ge\lambda-C_-\Delta(A,B)^{\theta_-}
 \quad \text{whenever $\Delta(A,B)<r_-$}.
\end{equation}
\section{Proofs of the main results}\label{sec:conclusion} 

\enlargethispage{0.5cm}
\subsection{Proof of Theorem~\ref{thm:main}} Combining \eqref{eq:upper-holder-new} with~\eqref{eq:lower-holder-new} proves
\eqref{eq:fixed-reference-target}, after taking
$\theta=\min\{\theta_+,\theta_-,1\}$. %and decreasing the radius below $1$.
Finally, for each sufficiently close law $\nu$, use
\eqref{eq:BM-realization} to obtain
\[
 |\lambda_1(\nu)-\lambda_1(\nu_0)|
 \le C\bigl(\delta_{\cT}(\nu,\nu_0)+\zeta\bigr)^\theta.
\]
Letting $\zeta\downarrow0$ completes the proof of
Theorem~\ref{thm:main}.

\subsection{Proof of Corollary~\ref{cor:first-k}}
For $1\le j\le d$, put $\rho_j(g)=\wedge^j g$,
$\nu^{[j]}=(\rho_j)_*\nu$, and
$S_j(\nu)=\sum_{i=1}^j\lambda_i(\nu)$; set $S_0=0$.
Equip each exterior power with its induced Euclidean norm and denote
by $\delta_{\cT}^{[j]}=\delta_W^{[j]}+\delta_H^{[j]}$ the corresponding
Wasserstein--Hausdorff distance.

For $\nu$ in a sufficiently small $\delta_{\cT}$-neighborhood of $\nu_0$,
the supports of $\nu$ and $\nu_0$ lie in a common compact set $K\subset G$.
Choose $M\ge1$ such that $\|g\|,\|g^{-1}\|\le M$ on $K$.
The tensor-product telescoping identity, restricted to the alternating
subspace, gives
\[
 \|\rho_j(g)-\rho_j(h)\|\le jM^{j-1}\|g-h\|,
 \qquad g,h\in K.
\]
Since $\rho_j(g)^{-1}=\rho_j(g^{-1})$, the same estimate for inverses yields
\[
 \|\rho_j(g)-\rho_j(h)\|
 +\|\rho_j(g)^{-1}-\rho_j(h)^{-1}\|
 \le jM^{j-1}\delta(g,h).
\]
Let $\Pi(\nu,\nu_0)$ denote the set of couplings of $\nu$ and $\nu_0$,
that is, probability measures $\pi$ on $K\times K$ whose first and second
marginals are $\nu$ and $\nu_0$, respectively. By definition,
\[
 \delta_W(\nu,\nu_0)
 =\inf_{\pi\in\Pi(\nu,\nu_0)}\int_{K\times K}\delta(g,h)\,d\pi(g,h).
\]
For every such $\pi$, the push-forward $(\rho_j\times\rho_j)_*\pi$ is a
coupling of $\nu^{[j]}$ and $\nu_0^{[j]}$. Therefore,
\[
\begin{aligned}
 \delta_W^{[j]}(\nu^{[j]},\nu_0^{[j]})
 &\le\int_{K\times K}\bigl(\|\rho_j(g)-\rho_j(h)\|
       +\|\rho_j(g)^{-1}-\rho_j(h)^{-1}\|\bigr)\,d\pi(g,h)\\
 &\le jM^{j-1}\int_{K\times K}\delta(g,h)\,d\pi(g,h).
\end{aligned}
\]
Taking the infimum over $\pi$ gives
$\delta_W^{[j]}(\nu^{[j]},\nu_0^{[j]})\le jM^{j-1}\delta_W(\nu,\nu_0)$.
The same pointwise bound, applied in both directions to the supports,
gives the corresponding Hausdorff estimate, since
$\supp\nu^{[j]}=\rho_j(\supp\nu)$. Adding the two estimates yields
\[
 \delta_{\cT}^{[j]}(\nu^{[j]},\nu_0^{[j]})
 \le jM^{j-1}\delta_{\cT}(\nu,\nu_0).
\]

The Lyapunov exponents of the $j$th exterior power are the sums of
$j$ distinct indexed exponents of the original product, as follows
from the corresponding identity for singular values. Thus, for
$j=1,\dots,\min\{k,d-1\}$,
\[
 \lambda_1(\nu^{[j]})=S_j(\nu),\qquad
 \lambda_1(\nu_0^{[j]})-\lambda_2(\nu_0^{[j]})
 =\lambda_j(\nu_0)-\lambda_{j+1}(\nu_0)>0.
\]
Theorem~\ref{thm:main} and the preceding 
estimate therefore give, for each $j=1,\dots,\min\{k,d-1\}$,
\[
 |S_j(\nu)-S_j(\nu_0)|
 \le C_j\delta_{\cT}(\nu,\nu_0)^{\theta_j}
\]
for $\nu$ in a neighborhood of $\nu_0$, with $\theta_j\in(0,1]$.

If $k=d$, multiplicativity of the determinant gives
\[
 S_d(\nu)
 =\lim_{n\to\infty}\frac1n\int_{G^n}\log|\det(g_n\cdots g_1)|
     \,d\nu^{\otimes n}
 =\int\log|\det g|\,d\nu(g).
\]
For $g,h\in K$, their singular values lie in $[M^{-1},M]$ and satisfy
$|\sigma_i(g)-\sigma_i(h)|\le\|g-h\|$. Consequently,
\[
 |\log|\det g|-\log|\det h||
 \le M\sum_{i=1}^d|\sigma_i(g)-\sigma_i(h)|
 \le dM\|g-h\|.
\]
For every $\pi\in\Pi(\nu,\nu_0)$, its marginal identities give
\[
\begin{aligned}
 S_d(\nu)-S_d(\nu_0)
 &=\int_{K\times K}\log|\det g|\,d\pi(g,h)
   -\int_{K\times K}\log|\det h|\,d\pi(g,h)\\
 &=\int_{K\times K}\bigl(\log|\det g|-\log|\det h|\bigr)\,d\pi(g,h).
\end{aligned}
\]
Taking absolute values and using the preceding pointwise estimate yields
\[
 |S_d(\nu)-S_d(\nu_0)|
 \le dM\int_{K\times K}\|g-h\|\,d\pi(g,h)
 \le dM\int_{K\times K}\delta(g,h)\,d\pi(g,h).
\]
Since this holds for every coupling, taking the infimum gives
$|S_d(\nu)-S_d(\nu_0)|\le dM\delta_W(\nu,\nu_0)$.
Finally, take a common radius $r_0<1$ and
$\theta=\min_{1\le j\le\min\{k,d-1\}}\theta_j$.
Since $\lambda_j=S_j-S_{j-1}$, the triangle inequality yields the
claimed estimate after increasing $C$.

\section*{Acknowledgments}
P.~G.~Barrientos was partially supported by the Spanish MCIN under grant
PID-2023-147461NB-I00, by CNPq under grants PQ~302738/2025-8 and Projeto Universal 401737/2025-0, and by CAPES---Finance Code 001. AI was used to assist with language editing, organization, and the checking and refinement of mathematical arguments, especially of Section~\ref{sec:bad-sequence}. The author
reviewed the resulting text and took responsibility for the mathematical
content and the final manuscript.

% Retain the author's custom style when available.
\bibliographystyle{alpha3}
\bibliography{references}
\end{document}